\numberwithin{equation}{section}
\newcommand{\pmat}[1]{\begin{pmatrix} #1 \end{pmatrix}}
\newcommand{\beq} {\begin{equation}}
\newcommand{\eeq} {\end{equation}}
\newcommand{\bdm} {\begin{displaymath}}
\newcommand{\edm} {\end{displaymath}}
\newcommand{\bit}{\begin{itemize}}
\newcommand{\eit}{\end{itemize}}
\newcommand{\bde}{\begin{description}}
\newcommand{\ede}{\end{description}}
\newcommand{\ben}{\begin{enumerate}}
\newcommand{\een}{\end{enumerate}}
\newcommand{\algn}[1]{\begin{align} #1 \end{align}}
\newcommand{\algns}[1]{\begin{align*} #1 \end{align*}}
\newcommand{\mltln}[1]{\begin{multline} #1 \end{multline}}
\newcommand{\mltlns}[1]{\begin{multline*} #1 \end{multline*}}
\newcommand{\gat}[1]{\begin{gather} #1 \end{gather}}
\newcommand{\barr}{\begin{array}}
\newcommand{\earr}{\end{array}}
\newcommand{\mc}[1]{\mathcal{#1}}
\newcommand{\LRp}[1]{\left( #1 \right)}
\newcommand{\LRb}[1]{\left \{ #1 \right \}}
\newcommand{\LRs}[1]{\left[ #1 \right]}
\newcommand{\LRa}[1]{\left< #1 \right>}
\newcommand{\LRc}[1]{\left\{ #1 \right\}}
\newcommand{\ra}{\rightarrow}
\newcommand{\e}{\epsilon}
\newcommand{\R}{{\mathbb R}}
\newcommand{\grad}{\operatorname{grad}}
\renewcommand{\div}{\operatorname{div}}
\newcommand{\tr}{\operatorname{tr}}
\newcommand{\bs}{\boldsymbol}
\newcommand{\lap}{\Delta}
\newcommand{\pd}{\partial}
\newtheorem{theorem}{Theorem}[section]
\newtheorem{lemma}[theorem]{Lemma}
\newcommand{\betab}{\bs{\beta}}
\newcommand{\dif}{\,{\rm d}}
\newcommand{\kappab}{\boldsymbol{\kappa}}
\newcommand{\nb}{\boldsymbol{n}}
\newcommand{\norm}[1]{\|  #1 \|} 
\newcommand{\normw}[2]{\left\Vert#1\right\Vert_{#2}}
\newcommand{\pbar}{\bar{p}}
\newcommand{\qbar}{\bar{q}}
\newcommand{\sigmab}{\boldsymbol{\sigma}}
\newcommand{\taub}{\boldsymbol{\tau}}
\newcommand{\ub}{\boldsymbol{u}}
\newcommand{\ubbar}{\bs{\bar{u}}}
\newcommand{\ubar}{\bar{p}}
\newcommand{\vb}{\boldsymbol{v}}
\newcommand{\vbbar}{\boldsymbol{\bar{v}}}
\newcommand{\Vb}{\boldsymbol{V}}
\newcommand{\wb}{\boldsymbol{w}}
\newcommand{\Xb}{\boldsymbol{X}}
\newcommand{\Yb}{\boldsymbol{Y}}
\newtheorem{remark}[theorem]{Remark}
\begin{document}


\title[HDG with elliptic regularity]{Analysis of hybridized discontinuous Galerkin methods without elliptic regularity assumptions}

\author{Jeonghun J. Lee} 
\address{Department of Mathematics, Baylor University, Waco, TX , USA}
\email{jeonghun\_lee@baylor.edu}
\urladdr{}
\subjclass[2000]{Primary: 65N30}
\keywords{discontinuous Galerkin methods, a priori error analysis}
\date{November, 2019 }
\maketitle

\begin{abstract}
  In the paper we present new stability and optimal error analyses of hybridized discontinuous Galerkin (HDG) methods which do not require elliptic regularity assumptions. To obtain error estimates without elliptic regularity assumptions, we use new inf-sup conditions based on stabilized saddle point structures of HDG methods. We show that this approach can be applied to obtain optimal error estimates of HDG methods for the Poisson equations, the convection-reaction-diffusion equations, the Stokes equations, and the Oseen equations.
\end{abstract}

\section{Introduction}
\label{sec:introduction}
%

The hybridization (or static condensation) idea in the theory of finite element methods was introduced to reduce computational costs of mixed methods \cite{veubeke}. It was also discovered that the hybridization can be useful to obtain better numerical solutions via post-processing \cite{Arnold:1985}. Then the hybridization technique was employed to discontinuous Galerkin methods \cite{Cockburn:2008} to mitigate the high computational cost of discontinuous Galerkin methods. 
In hybridized discontinuous Galerkin (HDG) methods, a trace unknown residing on the skeleton of meshes is introduced in addition to the original unknowns. After static condensation, the linear system can be reduced to a linear system such that the trace unknown is the only globally coupled unknown, therefore the system size is substantially reduced.
After a unified hybridization framework was introduced in \cite{Cockburn:2009}, HDG methods have been applied to various partial differential equations including the Poisson equations \cite{Cockburn:2008,Cockburn:2010}, the Helmholtz equations \cite{Griesmaier:2011,Cui:2013}, the convection-diffusion equations \cite{Chen-Cockburn:2012,Fu-Qiu-Zhang:2015}, the Stokes and the Oseen equations \cite{Cockburn-Gopalakrishnan:2011,Nguyen-Cockburn:2010,Cockburn-Shi:2013,Cockburn-Sayas:2014,Cockburn-Cui:2012,Cockburn-Cesmelioglu:2013}, and the Maxwell equations \cite{Chen-Qiu:2017,Lu-Chen:2017}, to name a few.


In most error analysis results of HDG methods with dual-mixed formulations, superconvergent error estimates of primal unknowns are obtained by a duality argument under the full elliptic regularity assumption.
However, the elliptic regularity assumption is not available, for instance, for domains with general non-convex geometry and for partial differential equations with discontinuous or sign-changing coefficients. Without the elliptic regularity assumption, optimal error estimates of primal unknowns are not clear. From this point of view, the error analysis of HDG methods relying on the regularity assumption is an obstacle to extend HDG methods to general problems with theoretical support of error analysis. 

We remark that there are a few optimal error estimate results of HDG methods which do not rely on an elliptic regularity assumption. In \cite{Cockburn-Dubois:2014}, an error estimate for the Poisson equations was obtained by using the recovery operator from the trace variable to the pressure variable. However, its extension to non-symmetric problems, such as the convection-diffusion equations, is not obvious. In \cite{Fu-Qiu-Zhang:2015}, an error analysis for the convection-diffusion equations was presented but the analysis requires a restrictive assumption on convection velocity. To the best of our knowledge, except these two equations, a certain form of elliptic regularity assumption is required for error analyses of the HDG methods based on dual-mixed formulations. 

The purpose of this paper is to present a new approach to obtain optimal error estimates of HDG methods without elliptic regularity assumptions. The main idea is to utilize a stabilized saddle point structure of HDG methods and derive a Babu\v{s}ka--Aziz type inf-sup condition of the system. This analysis was developed in \cite{Lee-Rhebergen} for the Poisson-type equations with sign-changing coefficients in which the elliptic regularity assumption is not true even on convex domains.  
In this paper we show that the idea is useful to analyze HDG methods for other partial differential equations without elliptic regularity assumptions. 
For this, we first show the stability and error analyses for the Poisson equations and the convection-reaction-diffusion equations in this paper. 
Then we develop the idea further to cover the Stokes equations and the Oseen equations. 
To the best of our knowledge, this is the first analysis result of HDG methods for these two equations without elliptic regularity assumptions.

The paper is organized as follows. In Section~\ref{sec:notation} we define symbols and notation in the paper. In Section~\ref{sec:poisson} we show the stability and a priori error estimate of the Poisson equations without the elliptic regularity assumption. In Section~\ref{sec:convection-diffusion}, we extend the analysis to the convection-diffusion-reaction equations. In Sections~\ref{sec:stokes} and \ref{sec:oseen}, the stability and a priori error estimates are developed for the Stokes and Oseen equations, respectively. We summarize the results in Section~\ref{sec:conclusion} with concluding remarks.

\section{Notation and definitions} \label{sec:notation}
Let $\Omega \subset \R^d$ with $d = 2,3$, be a bounded domain with polygonal or polyhedral boundary. Let $\mc{T}_h$ be a conforming triangulation of $\Omega$, i.e., $\mc{T}_h$ is a set of closed $d$-dimensional simplices whose interiors are disjoint such that $\cup_{K \in \mc{T}_h} K = \overline{\Omega}$. We use $\mathcal{F}_h$ to denote the set of closed $(d-1)$-dimensional simplices of the triangulation $\mc{T}_h$. For $K \in \mc{T}_h$ and $F \in \mc{F}_h$, $h_K$ and $h_F$ are the diameters of $K$ and $F$, respectively. For given triangulation $\mc{T}_h$, $h$ denotes $\max_{K \in \mc{T}_h} h_K$. We assume that the family of triangulations in the paper satisfies the shape regularity property. As a consequence, there exist uniform constants $C_1$ and $C_2$ independent of $h$ such that $h_K \le C_1 h_F$ and $h_F \le C_2 h_K$ for any $K \in \mc{T}_h$ and $F \in \mc{F}_h$ such that $F \subset \pd K$. 

For a set $D \subset \R^d$ we use $L^2(D)$ to denote the space of square-integrable functions on $D$ with the Lebesgue measure of $D$. For a finite dimensional linear space $\Bbb{X}$ on $\R$, $L^2(D; \Bbb{X})$ is the space of $\Bbb{X}$-valued square-integrable functions with the Euclidean inner product on $\Bbb{X}$. We use $\LRp{ \cdot, \cdot}_D$ and $\LRa{\cdot, \cdot}_D$ to denote the $L^2$ inner product on $L^2(D)$ when $D$ is a union of $d$-dimensional or $(d-1)$-dimensional simplices. We will use the same symbols for the inner products on $L^2(D; \Bbb{X})$. For $K \in \mc{T}_h$ and $p, q \in L^2(\pd K)$, $\LRa{ p, q}_{\pd K}$ stands for the integral $\int_{\pd K} p q \,\dif s$. For simplicity, we use $\LRp{ \cdot, \cdot}$ for 
$\LRp{\cdot, \cdot}_{\Omega}$. Similarly, $\LRa{\cdot,\cdot} = \sum_{K \in \mc{T}_h} \LRa{ \cdot, \cdot}_{\pd K}$ but we will use $\LRa{\cdot, \cdot}_{\pd \mc{T}_h}$ instead of $\LRa{\cdot, \cdot}$ when we need to specify the domain of integration.  

For $K \in \mc{T}_h$, $\nb_K$ is the unit outward normal vector field on $\pd K$, the boundary of $K$. For functions $\vb \in L^2(K; \R^d)$ and $q \in L^2(K)$ such that $\vb \cdot \nb_K$ and $q$ are well-defined 
on $\partial K$, we define $\langle \vb \cdot \nb, q \rangle_{\partial K} := \int_{\partial K} \vb \cdot \nb_K q \dif s$. By the aforementioned convention, 
\algns{
  \LRa{ \vb \cdot \nb, q} := \sum_{K \in \mc{T}_h} \LRa{ \vb \cdot \nb, q}_{\pd K} .
}
%

\section{The Poisson problems}
\label{sec:poisson}
We assume that $\Gamma_D$ and $\Gamma_N$ are disjoint subsets of $\partial \Omega$ such that $\partial \Omega = \overline{\Gamma_D} \cup \overline{\Gamma_N}$. We define $\mathcal{F}_h^D$ as the subset of
$\mathcal{F}_h$ such that $F \subset \overline{\Gamma_D}$ if $F \in \mc{F}_h^D$. $\mc{F}_h^N$ is defined similarly. We assume that all triangulations are conforming to $\Gamma_D$ and $\Gamma_N$. In other words, $\overline{\Gamma_D} = \cup_{F \in \mathcal{F}_h^D} F$ and
$\overline{\Gamma_N} = \cup_{F \in \mathcal{F}_h^N} F$. For simplicity, we assume that $\Gamma_D \not = \emptyset$ in the rest of this paper.

Let $\kappab = \kappab(x)$ be a tensor field on $\Omega$ which is symmetric positive definite at almost every $x \in \Omega$. Throughout this paper we assume that 
\begin{align}
  \label{eq:sigma-min}
  0 < \kappa_{\min} |\xi|^2 \le  \xi^T \kappab(x) \xi  \le \kappa_{\max} |\xi|^2 < +\infty
\end{align}
for all $0 \not = \xi \in \R^d$ and almost every $x \in \Omega$. 
The Poisson equation is to seek a function $p : \Omega \to \mathbb{R}$ such that 
\begin{subequations}
  \label{eq:problem}
  \begin{align}
    \div \LRp{\kappab \grad p} &= f && \text{ in } \Omega, \\
    p &= p_D && \text{ on } \Gamma_D, \\
    - \kappab \grad p \cdot \nb &= p_N && \text{ on } \Gamma_N,
  \end{align}
\end{subequations}
where $p_D \in H^{\frac 12} (\Gamma_D)$ and $p_N \in H^{-\frac 12}( \Gamma_N)$ are given boundary data and $f \in L^2(\Omega)$ is a given source function.

\subsection{HDG methods for the Poisson equation}

In the mixed formulation of the Poisson equation we introduce an auxiliary variable $\ub = -\kappab \grad p$ and rewrite \eqref{eq:problem} as a system of first order equations:
\begin{subequations}
  \label{eq:mixed-poisson}
  \begin{align}
    \kappab^{-1} \ub  + \grad p &= 0 && \text{ in } \Omega, \\
    - \div \ub &= f && \text{ in } \Omega, \\
    p &= p_D && \text{ on } \Gamma_D, \\
    \ub \cdot \nb &= p_N && \text{ on } \Gamma_N.
  \end{align}
\end{subequations}
To define HDG methods for \eqref{eq:mixed-poisson} we will use the following finite element spaces:
\begin{align}
\label{eq:poisson-Vh}  \Vb_h &= \LRb{ \vb \in L^2(\Omega; \R^d) \;:\; \vb|_{K} \in \Vb(K) , \quad \forall K \in \mathcal{T}_h }, \\
\label{eq:poisson-Qh}  Q_h &= \LRb{ v \in L^2(\Omega) \;:\; v|_{K} \in Q(K) , \quad \forall K \in \mathcal{T}_h }, \\
\label{eq:poisson-Mh}  M_{h} &= \LRb{ \mu \in L^2(\mathcal{F}_h) \;:\; \mu|_{F} \in M(F) , \quad \forall F \in \mathcal{F}_h, \quad \mu|_{\mathcal{F}_h^D} = 0 }, 
\end{align}
where $\Vb(K)$, $Q(K)$, $M(F)$ are finite dimensional spaces on the domains $K$ and $F$.
There are many versions of HDG methods up to the choices of $\Vb(K)$, $Q(K)$, $M(F)$. The most common HDG method in the literature uses 
\algn{ \label{eq:poisson-spaces}
\Vb(K) = \mc{P}_k (K; \R^d), \qquad Q(K) = \mc{P}_k (K), \qquad M(F) = \mc{P}_k(F), \qquad k \ge 0,
}
because it gives optimal equal order of convergence for $\ub$ and $p$, and a superconvergence result of $p$ can be obtained by a duality argument under the full elliptic regularity assumption. Therefore, we will only consider the HDG method with \eqref{eq:poisson-spaces}. 


For $(\ub, p)$, a solution of \eqref{eq:mixed-poisson}, let $\pbar$ be the restriction of $p$ on $\mc{F}_h \setminus \mc{F}_h^D$ and 
$\pbar = 0$ on $\mc{F}_h^D$. 
By the integration by parts, if $(\ub, p) \in H^1(\Omega; \R^d) \times H^1(\Omega)$, then it satisfies 
\begin{subequations}
  \label{eq:poisson-eqs}
  \begin{align}
    \label{eq:poisson-eq1}
    \LRp{ \kappab^{-1} \ub, \vb }
    + \LRp{ \grad p, \vb}
    - \LRa{ p - \pbar, \vb \cdot \nb }
    &= - \LRa{p_D, \vb \cdot \nb}_{\mathcal{F}_h^D}  ,
    \\
    \label{eq:poisson-eq2}
    \LRp{ \ub, \grad q}
    - \LRa{ \ub \cdot \nb + \tau (p - \pbar), q }
    &= - \LRa{ \tau p_D, q}_{\mathcal{F}_h^D} + \LRp{ f, q} ,
    \\
    \label{eq:poisson-eq3}
    \LRa{ \ub \cdot \nb + \tau (p - \pbar), \qbar}_{\mc{F}_h \setminus \mathcal{F}_h^D}
    &= \LRa{ p_N, \qbar }_{ \mathcal{F}_h^N} ,
  \end{align}
\end{subequations}
for any $(\vb, q, \qbar) \in \Vb_h \times Q_h \times M_h$. 
Here $\tau$ is a piecewise constant function with positive values on $\mc{F}_h$. 
In fact, $p - \pbar$ vanishes on $\mc{F}_h \setminus \mc{F}_h^D$ in the above equations, 
so all terms including $p - \pbar$ in the above formula vanish for any $\tau$. 
However, we keep those terms here for comparison with the HDG formulation below.


In HDG methods we consider a discrete version of \eqref{eq:poisson-eqs}: Find
$(\ub_h, p_h, \pbar_h) \in \Vb_h \times Q_h \times M_h$ such that
\begin{subequations}
  \label{eq:poisson-HDG-eqs}
  \begin{align}
    \label{eq:poisson-HDG-eq1}
    \LRp{ \kappab^{-1} \ub_h, \vb }
    + \LRp{ \grad p_h, \vb}
    - \LRa{ p_h - \pbar_h, \vb \cdot \nb }
    &= - \LRa{p_D, \vb \cdot \nb}_{\mathcal{F}_h^D}  ,
    \\
    \label{eq:poisson-HDG-eq2}
    \LRp{ \ub_h, \grad q}
    - \LRa{ \ub_h \cdot \nb + \tau (p_h - \pbar_h), q }
    &= - \LRa{ \tau p_D, q}_{\mathcal{F}_h^D} + \LRp{ f, q} ,
    \\
    \label{eq:poisson-HDG-eq3}
    \LRa{ \ub_h \cdot \nb + \tau (p_h - \pbar_h), \qbar}_{\mc{F}_h \setminus \mathcal{F}_h^D}
    &= \LRa{ p_N, \qbar }_{ \mathcal{F}_h^N} ,
  \end{align}
\end{subequations}
for any $(\vb, q, \qbar) \in \Vb_h \times Q_h \times M_h$. 
We remark that \eqref{eq:poisson-HDG-eqs} is different from the HDG formulations in most other HDG papers. 
We use this formulation here to take the same trial and test function spaces because it is advantageous to reveal a stabilized saddle point structure of the HDG methods. The stabilized saddle point structure will be crucial to obtain an error analysis without the Aubin--Nitsche duality argument. 

For later use we define bilinear forms 
\begin{align} 
  \label{eq:poisson-bilinear-form1}
    a_P(\vb, \vb') &:= \LRp{\kappab^{-1} \vb, \vb'}, & & \vb, \vb' \in \Vb_h, 
    \\
  \label{eq:poisson-bilinear-form2}
    b_P((q, \qbar), \vb) &:=
    \LRp{\grad q, \vb} - \LRa{ q - \qbar, \vb\cdot \nb}, & & \vb \in \Vb_h, q \in Q_h, \qbar \in M_h, 
    \\
  \label{eq:poisson-bilinear-form3}
    c_P( (q, \qbar), (q', \qbar') ) &:=
    \LRa{ \tau (q - \qbar), q' - \qbar' }, & & q, q' \in Q_h, \qbar, \qbar' \in M_h .
\end{align}
Note that 
\mltlns{
\LRa{ \ub_h \cdot \nb + \tau( p_h - \pbar_h), q} - \LRa{ \ub_h \cdot \nb + \tau( p_h - \pbar_h), \qbar}_{\pd \mc{T}_h \setminus \mc{F}_h^D} \\
=  \LRa{ \ub_h \cdot \nb + \tau( p_h - \pbar_h), q - \qbar}
}
because $\qbar = 0$ on $\mc{F}_h^D$. 
Then the sum of the left-hand sides of \eqref{eq:poisson-HDG-eqs} is written with the above three bilinear forms as 
\algn{
  \label{eq:ah-form}
  \mc{B}_P((\ub_h, p_h, \pbar_h), (\vb, q, \qbar)) &:= a_P(\ub_h, \vb) + b_P((p_h, \pbar_h), \vb) \\
  \notag &\quad + b_P((q, \qbar), \ub_h) - c_P((p_h, \pbar_h), (q, \qbar)) .
}

For simplicity we assume $p_D = p_N = 0$ in the remainder of this paper but the arguments below can be easily extended to the cases with inhomogeneous boundary conditions.

%
\begin{lemma} 
  \label{lemma:div-surjective}
  For $q \in Q_h$, there exists $\wb \in H^1(\Omega; \Bbb{R}^d)$ such that $\div \wb = q$, $\wb \cdot \nb = 0$ on $\Gamma_N$, and $\norm{ \wb }_{1} \leq C_{\Omega} \norm{ q }_{0}$ with $C_{\Omega}$ which only depends on $\Omega$ and $\Gamma_D$.
\end{lemma}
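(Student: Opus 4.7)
The plan is to deduce this from the classical Bogovskii right-inverse of the divergence on a bounded Lipschitz domain. Since we are not allowed to impose a trace condition on $\Gamma_D$, the idea is to enlarge $\Omega$ across $\Gamma_D$ so that the required condition $\wb \cdot \nb = 0$ on $\Gamma_N$ becomes part of a homogeneous Dirichlet trace on the boundary of the enlarged domain.

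\textbf{Step 1.} Because $\partial\Omega$ is polygonal/polyhedral and $\Gamma_D$ is a nonempty union of closed faces, I would construct a bounded Lipschitz domain $\tilde\Omega\supset\Omega$ with $\overline{\Gamma_N}\subset\partial\tilde\Omega$, by gluing a ``collar'' on the $\Gamma_D$ side. All constants in what follows will depend only on $\Omega$ and on this fixed choice of $\tilde\Omega$, hence only on $\Omega$ and $\Gamma_D$.

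\textbf{Step 2.} Extend $q$ to $\tilde q \in L^2(\tilde\Omega)$ with zero mean by setting $\tilde q = q$ on $\Omega$ and $\tilde q = -|\tilde\Omega \setminus \overline\Omega|^{-1} \int_\Omega q \dif x$ on $\tilde\Omega \setminus \overline\Omega$. Then $\|\tilde q\|_{0,\tilde\Omega} \le C \|q\|_{0,\Omega}$.

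\textbf{Step 3.} Bogovskii's theorem on the bounded Lipschitz domain $\tilde\Omega$ supplies $\tilde\wb \in H^1_0(\tilde\Omega;\R^d)$ with $\div\tilde\wb = \tilde q$ and $\|\tilde\wb\|_{1,\tilde\Omega} \le C_{\tilde\Omega} \|\tilde q\|_{0,\tilde\Omega}$. Restricting, $\wb := \tilde\wb|_\Omega \in H^1(\Omega;\R^d)$ satisfies $\div \wb = q$, and since $\tilde\wb$ vanishes on $\partial\tilde\Omega$ and $\Gamma_N \subset \partial\tilde\Omega$, the trace of $\wb$ on $\Gamma_N$ is zero; in particular $\wb \cdot \nb = 0$ on $\Gamma_N$. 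Combining the estimates yields $\|\wb\|_1 \le C_\Omega \|q\|_0$.

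The main obstacle is the geometric construction in Step~1: the collar has to be attached so that $\tilde\Omega$ is genuinely Lipschitz near the interface $\overline{\Gamma_D} \cap \overline{\Gamma_N}$, where $\Gamma_D$ and $\Gamma_N$ meet at possibly reentrant angles. For polygonal/polyhedral $\Omega$ this can always be arranged by a sufficiently thin thickening localized strictly away from $\overline{\Gamma_N}$, but writing the construction down precisely is the most technical ingredient; the analytic half of the argument is then a direct application of Bogovskii on a standard Lipschitz domain and does not rely on any elliptic regularity of a Poisson-type problem.
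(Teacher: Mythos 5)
Your argument is correct, but it is worth noting that the paper does not prove this lemma at all: it simply cites it as a known result (Galdi's monograph on the steady Navier--Stokes equations), so there is no ``paper proof'' to match against. What you have written is the standard way to actually derive the mixed-boundary-condition version from the classical zero-trace Bogovskii result, and the three steps are sound: the extension $\tilde q$ has zero mean on $\tilde\Omega$ and satisfies $\norm{\tilde q}_{0,\tilde\Omega}\le C\norm{q}_{0}$ by Cauchy--Schwarz; Bogovskii on the Lipschitz domain $\tilde\Omega$ applies because $\tilde q$ now meets the compatibility condition; and the restriction inherits $\div \wb = q$ and a vanishing full (hence normal) trace on $\Gamma_N\subset\partial\tilde\Omega$. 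Two points deserve emphasis. First, the hypothesis $\Gamma_D\neq\emptyset$ is exactly what makes Step~1--2 possible: the collar must have positive volume so that the constant correction in Step~2 is well defined, and this is why no mean-zero condition on $q$ is needed in the lemma. Second, the constant you obtain depends on the fixed choice of $\tilde\Omega$, hence on $\Omega$ and $\Gamma_D$, which matches the dependence asserted in the statement. Your geometric Step~1 is feasible as described --- e.g., attaching a small ball centered at an interior point of a face of $\Gamma_D$, away from $\overline{\Gamma_N}$, yields a Lipschitz union --- and, importantly, the whole construction avoids the alternative route of solving a mixed Poisson problem and taking $\wb=\grad\phi$, which would require $H^2$ regularity and thus defeat the purpose of the paper.
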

\begin{proof}
This is a known result (see, e.g., \cite[p.176]{Galdi:2011}), so proof is omitted.
\end{proof}

The norm $\norm{\vb}_{\Vb_h}$ for $\vb \in \Vb_h$ and semi-norm $| q - \qbar |_{\tau, s, \mc{F}_h}$ for $(q, \qbar) \in Q_h \times M_h$ are 
\algn{ \label{eq:poisson-seminorm}
  \norm{ \vb }_{\Vb_h}^2 = \LRp{\kappab^{-1} \vb, \vb}, \qquad 
| q - \qbar |_{\tau, s, \mc{F}_h}^2 = \sum_{K \in \mc{T}_h} h_K^{2s} \LRa{ \tau \LRp{q - \qbar}, q - \qbar}_{\pd K} ,
}
and $\Xb_h$ is the space $\Vb_h \times Q_h \times M_h$ with the norm 
\begin{align*}
\norm{(\vb, q, \qbar)}_{\Xb_h}^2 = \norm{\vb}_{\Vb_h}^2 + \norm{q}_0^2 + | q - \qbar |_{\tau, 0, \mc{F}_h}^2 .
\end{align*}
We will show that $\mc{B}_P$ satisfies an inf-sup condition, which will be used for optimal error estimates without elliptic regularity assumptions.
As the first step for the inf-sup condition of $\mc{B}_P$, we need a weak inf-sup condition stated below.
\begin{lemma} \label{lemma:poisson-weak-inf-sup}
Suppose that $\tau \ge \tau_{\min} $ on $\mc{F}_h$ for a constant $\tau_{\min} >0$.
Then for $b_P(\cdot, \cdot)$ in \eqref{eq:poisson-bilinear-form2}, there exist
  $C_0, C_1 >0$ independent of $h$ such that
  \begin{equation}
    \label{eq:poisson-weak-inf-sup}
    \inf_{(q, \qbar) \in Q_h \times M_h} \sup_{\vb \in \Vb_h }
    \frac{b_P((q, \qbar), \vb)}{\norm{ \vb }_{0} }
    > C_0 \norm{ q }_{0} - \frac{C_1  }{\tau_{\min}}  | q - \qbar |_{\tau, \frac 12, \mc{F}_h} .
  \end{equation}
\end{lemma}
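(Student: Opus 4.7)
The plan is to use a Fortin-type construction: given $(q, \qbar) \in Q_h \times M_h$, I would build an explicit test function $\vb \in \Vb_h$ so that $b_P((q, \qbar), \vb)$ controls $\|q\|_0^2$ up to a term absorbable into the jump seminorm, while $\|\vb\|_0 \lesssim \|q\|_0$. The natural starting point is Lemma~\ref{lemma:div-surjective}, which furnishes $\wb \in H^1(\Omega; \R^d)$ with $\div \wb = q$, $\wb \cdot \nb = 0$ on $\Gamma_N$, and $\|\wb\|_1 \le C_\Omega \|q\|_0$. Letting $\Pi$ denote the elementwise $L^2$ orthogonal projection onto $\Vb_h$, I would take $\vb = -\Pi \wb$; this immediately gives $\|\vb\|_0 \le \|\wb\|_0 \le C_\Omega \|q\|_0$.

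The core computation evaluates $b_P((q,\qbar), \vb)$ using the choice $\Vb(K) = \mc{P}_k(K;\R^d)$ and $Q(K)=\mc{P}_k(K)$. Since $\grad q|_K \in \mc{P}_{k-1}(K;\R^d) \subset \Vb(K)$, self-adjointness of $\Pi$ gives $(\grad q, \Pi \wb) = (\grad q, \wb)$, and elementwise integration by parts yields
\[
(\grad q, \wb) = \LRa{q, \wb \cdot \nb}_{\pd \mc{T}_h} - (q, \div \wb) = \LRa{q, \wb\cdot\nb}_{\pd\mc{T}_h} - \|q\|_0^2.
\]
Rewriting the second half of $b_P$ as $\LRa{q - \qbar, \Pi\wb \cdot \nb}_{\pd\mc{T}_h}$ and adding, I arrive at
\[
b_P((q,\qbar), \vb) = \|q\|_0^2 - \LRa{\qbar, \wb \cdot \nb}_{\pd\mc{T}_h} + \LRa{q-\qbar, (\Pi\wb - \wb)\cdot\nb}_{\pd\mc{T}_h}.
\]
The middle term vanishes: on interior faces, $\qbar$ is single-valued in $M_h$ while the opposite normal orientations of adjacent elements produce a cancellation of $\wb\cdot\nb$ (which is well-defined as $\wb \in H(\div)$); on $\mc{F}_h^D$ one uses $\qbar = 0$, and on $\mc{F}_h^N$ one uses $\wb \cdot \nb = 0$. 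This cancellation is the crucial structural identity, and it is exactly where the compatibility between the Dirichlet condition encoded in $M_h$ and the Neumann compatibility built into $\wb$ pays off.

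The remaining boundary residual is then estimated by standard means. A trace inequality combined with $L^2$-projection approximation yields $\|(\Pi \wb - \wb)\cdot \nb\|_{L^2(\pd K)} \le C h_K^{1/2} \|\wb\|_{H^1(K)}$ on each element. Cauchy--Schwarz and the elementary bound $\|q-\qbar\|_{L^2(\pd K)}^2 \le \tau_{\min}^{-1} \LRa{\tau(q-\qbar), q-\qbar}_{\pd K}$ give
\[
\left|\LRa{q-\qbar, (\Pi\wb - \wb) \cdot \nb}_{\pd \mc{T}_h}\right| \le \frac{C}{\tau_{\min}^{1/2}} \,|q-\qbar|_{\tau, \frac12, \mc{F}_h}\,\|\wb\|_1.
\]
Combining with $\|\wb\|_1 \le C_\Omega \|q\|_0$ and dividing by $\|\vb\|_0 \le C_\Omega \|q\|_0$ produces a bound of the stated form. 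The main conceptual obstacle is the $\qbar$-cancellation above; the rest is bookkeeping with well-known trace and projection estimates on shape-regular meshes. A minor technical point to keep track of is the $\tau_{\min}$ exponent in the coefficient of the seminorm term, which naturally arises as $\tau_{\min}^{-1/2}$ but can be absorbed into the constant $C_1$ as stated.
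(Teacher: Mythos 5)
Your proposal is correct and follows essentially the same route as the paper: the divergence lifting from Lemma~\ref{lemma:div-surjective}, the elementwise $L^2$ projection onto $\Vb_h$, integration by parts with the single-valuedness cancellation of $\langle \qbar, \wb\cdot\nb\rangle$, and an elementwise trace estimate for the residual. Your sharper coefficient $\tau_{\min}^{-1/2}$ (versus the paper's $\tau_{\min}^{-1}$) is in fact what the trace computation naturally yields, and it suffices for the stated form and for the subsequent use in Theorem~\ref{thm:poisson-inf-sup}.
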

\begin{proof}
  We show \eqref{eq:poisson-weak-inf-sup} by proving an equivalent condition, i.e., there exist
  $C_1', C_2' >0$ independent of $h$ such that for any
  $0 \neq (q, \qbar) \in Q_h \times M_h$ one can find 
  $\vb \in \Vb_h$ such that
  $\norm{ \vb }_{0} \le C_2' \norm{ q }_{0}$ and
  \begin{align}
    \label{eq:poisson-weak-inf-sup-new}
    b_P((q, \qbar), \vb)
    \ge \norm{ q }_{0}^2 - \frac{C_1'}{\tau_{\min}} | q - \qbar |_{\tau, \frac 12, \mc{F}_h} \norm{ q }_{0}.
  \end{align}
  %
  
  To prove \eqref{eq:poisson-weak-inf-sup-new} we first note that there exists
  $\wb \in H^1(\Omega; \Bbb{R}^d)$ such that $\wb \cdot \nb|_{\Gamma_N} = 0$, 
  $\div \wb = -q$, and
  $\norm{ \wb }_{1} \le C_{\Omega} \norm{ q }_{0}$ with
  $C_{\Omega}$ depending only on $\Omega$ and $\Gamma_D$  
  by Lemma~\ref{lemma:div-surjective}. Let $\Pi_h \wb$ be the $L^2$ projection of $\wb$ into $\Vb_h$. Then 
  \begin{equation}
    \label{eq:bhuhw}
    \begin{split}
      -\norm{ q }_{0}^2 
      &= \LRp{q, \div \wb} \\
      &= - \LRp{ \grad q, \wb} + \LRa{ q, \wb \cdot \nb } \\
      &= - \LRp{ \grad q, \Pi_h \wb} + \LRa{ q - \qbar, \wb \cdot \nb }  \\
      &= - b_P \LRp{(q, \qbar), \Pi_h \wb} + \LRa{ q - \qbar, (\wb - \Pi_h \wb ) \cdot \nb } ,
    \end{split}
  \end{equation}
  where the third equality is obtained by applying the facts that $\wb\cdot\nb$ and $\qbar$ are single-valued on $\mc{F}_h$, and the facts that $\qbar = 0$ on $\Gamma_D$ and $\wb \cdot \nb = 0$ on $\Gamma_N$.
    
  An element-wise trace inequality gives
  \begin{equation}
    \label{eq:traceineqangles}
    \begin{split}
     \left| \LRa{ q - \qbar, (\wb - \Pi_h \wb ) \cdot \nb } \right|
     & \le \frac{C}{\tau_{\min}} | q - \qbar |_{\tau, \frac 12, \mc{F}_h} \norm{ \wb }_{1}
     \\
     & \le \frac{C C_{\Omega}}{\tau_{\min}} | q - \qbar |_{\tau, \frac 12, \mc{F}_h } \norm{ q }_{0}      
    \end{split}
  \end{equation}
  with $C$ depending on the implicit constant in the trace inequality.
  Combining \eqref{eq:bhuhw} and \eqref{eq:traceineqangles} we obtain
  \begin{equation}
    b_P\LRp{(q, \qbar), \Pi_h \wb} \ge \norm{ q }_{0}^2
    - \frac{C C_{\Omega}}{\tau_{\min}} | q - \qbar |_{\tau, \frac 12, \mc{F}_h } \norm{ q }_{0}.
  \end{equation}
  Moreover, $\norm{ \Pi_h \wb }_{0} \le \norm{ \wb }_{0} \le \norm{ \wb }_{1} \le C_{\Omega} \norm{ q }_{0}$, so the assertion follows.
\end{proof}
\begin{remark}
Although we only consider the spaces in \eqref{eq:poisson-spaces}, 
Lemma~\ref{lemma:poisson-weak-inf-sup} can be obtained for other polynomial spaces
if $\grad Q(K) \subset \Vb(K)$ holds. The proof is same.
\end{remark}

We remark that $ | q - \qbar |_{\tau, s, \mc{F}_h} \le h^s | q - \qbar |_{\tau, 0, \mc{F}_h}$ holds. 
We now prove an inf-sup condition of $\mc{B}_P$. 
%
\begin{theorem}[inf-sup condition] \label{thm:poisson-inf-sup}
  If $\tau \ge \tau_{\min} \ge C_{\tau} h^{\frac 12}$ holds in \eqref{eq:poisson-bilinear-form3} on $\mc{F}_h$
  with a constant $C_{\tau}>0$, then there exists $\gamma_P$
  independent of $h$ such that
  %
  %
  \begin{align}
    \label{eq:poisson-Bh-inf-sup}
    \inf_{(\vb, q, \qbar) \in \Xb_h } \sup_{(\vb', q', \qbar') \in \Xb_h }
    \frac{\mc{B}_P((\vb, q, \qbar), (\vb', q', \qbar') )}{\norm{ (\vb, q, \qbar) }_{\Xb_h} \norm{ (\vb', q', \qbar') }_{\Xb_h} } \ge \gamma_P > 0.
  \end{align}
  Here $\gamma_P$ may approach $0$ as $C_{\tau} \ra 0$ or $\kappa_{\min} \ra 0$. 
\end{theorem}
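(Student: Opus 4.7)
My plan is to exploit the stabilized saddle point structure of $\mc{B}_P$: the diagonal blocks $a_P$ and $c_P$ give unconditional nonnegative control on $\|\ub_h\|_{\Vb_h}^2$ and $|p_h-\pbar_h|_{\tau,0,\mc{F}_h}^2$, and the weak inf-sup of $b_P$ (Lemma~\ref{lemma:poisson-weak-inf-sup}) delivers the missing control on $\|p_h\|_0$. The standard device is to test against a convex combination of a ``coercivity'' test function and an ``inf-sup'' test function.

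\textbf{Step 1 (coercivity-type test).} For fixed $(\ub_h, p_h, \pbar_h) \in \Xb_h$, test with $(\vb^{(1)}, q^{(1)}, \qbar^{(1)}) := (\ub_h, -p_h, -\pbar_h)$. The two off-diagonal $b_P$ contributions cancel, leaving
\begin{align*}
\mc{B}_P((\ub_h, p_h, \pbar_h), (\vb^{(1)}, q^{(1)}, \qbar^{(1)})) = \|\ub_h\|_{\Vb_h}^2 + |p_h - \pbar_h|_{\tau, 0, \mc{F}_h}^2.
\end{align*}
This handles two of the three norm pieces but says nothing about $\|p_h\|_0$.

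\textbf{Step 2 (pressure test via weak inf-sup).} Apply Lemma~\ref{lemma:poisson-weak-inf-sup} to $(p_h, \pbar_h)$ to obtain $\vb^* \in \Vb_h$ satisfying $\|\vb^*\|_0 \lesssim \|p_h\|_0$ and
\begin{align*}
 b_P((p_h, \pbar_h), \vb^*) \ge \|p_h\|_0^2 - \tfrac{C}{\tau_{\min}} |p_h - \pbar_h|_{\tau, 1/2, \mc{F}_h} \|p_h\|_0.
\end{align*}
Test $\mc{B}_P$ with $(\vb^*, 0, 0)$; the only extra term beyond $b_P((p_h,\pbar_h),\vb^*)$ is $a_P(\ub_h, \vb^*)$, which is controlled by Cauchy--Schwarz as $\lesssim \kappa_{\min}^{-1/2}\|\ub_h\|_{\Vb_h}\|p_h\|_0$. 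Crucially, the inverse-type estimate $|p_h - \pbar_h|_{\tau, 1/2, \mc{F}_h} \le h^{1/2} |p_h - \pbar_h|_{\tau, 0, \mc{F}_h}$ combined with the hypothesis $\tau_{\min} \ge C_\tau h^{1/2}$ turns the $\tau_{\min}^{-1}|\cdot|_{\tau,1/2,\mc{F}_h}$ factor into a clean multiple of $|p_h - \pbar_h|_{\tau, 0, \mc{F}_h}$.

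\textbf{Step 3 (convex combination).} Take $(\vb', q', \qbar') := (\ub_h + \eta \vb^*, -p_h, -\pbar_h)$ for a small $\eta > 0$ to be chosen. Summing the outcomes of Steps 1 and 2 gives a lower bound of the form
\begin{align*}
\mc{B}_P((\ub_h, p_h, \pbar_h), (\vb', q', \qbar')) \ge \|\ub_h\|_{\Vb_h}^2 + |p_h - \pbar_h|_{\tau, 0, \mc{F}_h}^2 + \eta\|p_h\|_0^2 - \eta R,
\end{align*}
where $R$ collects the cross terms $\|\ub_h\|_{\Vb_h}\|p_h\|_0$ and $|p_h - \pbar_h|_{\tau, 0, \mc{F}_h}\|p_h\|_0$. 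Young's inequality absorbs these into the three positive quantities, provided $\eta$ is chosen sufficiently small depending on $\kappa_{\min}$ and $C_\tau$; this produces the positive lower bound $\gtrsim \|(\ub_h, p_h, \pbar_h)\|_{\Xb_h}^2$.

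\textbf{Step 4 (bound on the test function).} Finally, $\|(\vb', q', \qbar')\|_{\Xb_h} \lesssim \|(\ub_h, p_h, \pbar_h)\|_{\Xb_h}$: $\|\ub_h + \eta\vb^*\|_{\Vb_h} \le \|\ub_h\|_{\Vb_h} + \eta\kappa_{\min}^{-1/2}\|\vb^*\|_0 \lesssim \|\ub_h\|_{\Vb_h} + \eta\kappa_{\min}^{-1/2}\|p_h\|_0$, with the $p_h, \pbar_h$ contributions unchanged in modulus. Dividing then yields \eqref{eq:poisson-Bh-inf-sup} with $\gamma_P$ depending on $C_\tau$, $\kappa_{\min}$, and $C_\Omega$, and degenerating as $C_\tau \to 0$ or $\kappa_{\min} \to 0$, as claimed.

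The main subtlety I anticipate is the balancing of constants in Step 3: the weak inf-sup produces a term scaled by $\tau_{\min}^{-1}$ on a sharper ($h^{1/2}$-weighted) seminorm than the one appearing in $\|\cdot\|_{\Xb_h}$, so the hypothesis $\tau_{\min} \ge C_\tau h^{1/2}$ is precisely what compensates, and without this scaling the Young's-inequality absorption in Step 3 would fail $h$-uniformly.
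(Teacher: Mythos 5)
Your proposal is correct and follows essentially the same route as the paper's proof: the paper also tests against $(\vb+\delta\vb_0,-q,-\qbar)$ where $\vb_0$ comes from Lemma~\ref{lemma:poisson-weak-inf-sup}, absorbs the cross terms $a_P(\vb,\vb_0)$ and the $\tau_{\min}^{-1}|\cdot|_{\tau,\frac 12,\mc{F}_h}$ remainder via Young's inequality together with $|\cdot|_{\tau,\frac 12,\mc{F}_h}\le Ch^{\frac 12}|\cdot|_{\tau,0,\mc{F}_h}$ and the hypothesis $\tau_{\min}\ge C_{\tau}h^{\frac 12}$, and then verifies boundedness of the test function. Your identification of the constant-balancing in Step 3 as the crux, and of the role of the $h^{\frac 12}$ lower bound on $\tau_{\min}$, matches the paper exactly.
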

\begin{proof}

  To prove the inf-sup condition we will show the following: There exist constants $C_1, C_2>0$ independent of $h$ such that, 
  for any given $(\vb, q, \qbar) \in \Xb_h$, one can find $(\vb', q', \qbar') \in \Xb_h$ satisfying $\mc{B}_P ((\vb, q, \qbar), (\vb', q', \qbar')) \ge C_1 \norm{(\vb, q, \qbar)}_{\Xb_h}^2$ and $\norm{(\vb', q', \qbar')}_{\Xb_h} \le C_2 \norm{(\vb, q, \qbar)}_{\Xb_h}$.
  
  Let $(\vb, q, \qbar) \in \Xb_h$ be given and
  suppose that $\vb_0 \in \Vb_h$ is an
  element satisfying \eqref{eq:poisson-weak-inf-sup-new}  with $\norm{\vb_0}_{0} \le C_2' \norm{q}_0$ for given $q$ and $\qbar$. 
  We now take
  $\vb' = \vb  + \delta
  \vb_0 $, $q' = - q$,
  $\qbar' = - \qbar$ in \eqref{eq:ah-form}, with
  $\delta > 0$ to be determined later. Then, by \eqref{eq:poisson-weak-inf-sup-new},  
  \algn{
    \notag \mc{B}_P((\vb, q, \qbar), (\vb', q', \qbar'))
     &= \mc{B}_P((\vb, q, \qbar), (\vb, -q, -\qbar)) + \delta \mc{B}_P((\vb, q, \qbar), (\vb_0, 0, 0)) \\
     \notag &= \norm{ \vb }_{\Vb_h}^2 + | q - \qbar |_{\tau, 0, \mc{F}_h}^2 + \delta \LRp{a_P(\vb, \vb_0) + b_P((q, \qbar), \vb_0 ) } \\
    \label{eq:BP-intermediate} &\ge \norm{ \vb }_{\Vb_h}^2 + | q - \qbar |_{\tau, 0, \mc{F}_h}^2 + \delta a_P( \vb, \vb_0)  \\
    \notag &\quad + \delta C_0' \norm{ q }_{0}^2
    -  \delta \frac{C_1'}{\tau_{\min}} | q - \qbar |_{\tau, \frac 12, \mc{F}_h} \norm{ q }_{0}.
  }
  By Young's inequality and the inequality $| \cdot |_{\tau, \frac 12, \mc{F}_h} \le Ch^{\frac 12} | \cdot |_{\tau, 0, \mc{F}_h}$, 
  \algns{
  \delta | a_P (\vb, \vb_0) |
  &\le \frac{1}{2} \norm{\vb}_{\Vb_h}^2 + \frac{\delta^2}{2} \norm{\vb_0}_{\Vb_h}^2 
   \le  \frac{1}{2} \norm{\vb}_{\Vb_h}^2 + \frac 12 \delta^2 \kappa_{\min}^{-1} (C_2')^2 \norm{q}_0^2 , \\
   \delta \frac{C C_1'}{\tau_{\min}} | q - \qbar |_{\tau, \frac 12, \mc{F}_h} \norm{ q }_{0} 
   &\le \frac{ \e_1 h (C C_1')^2}{\tau_{\min}^2}  | q - \qbar |_{\tau, 0, \mc{F}_h}^2 +  \frac{\delta^2 }{4 \e_1 } \norm{q}_0^2 \\
   &\le \frac{ \e_1 (C C_1')^2}{C_{\tau}^2}  | q - \qbar |_{\tau, 0, \mc{F}_h}^2 +  \frac{\delta^2 }{4 \e_1 } \norm{q}_0^2 
  }
  for any $\e_1>0$.
  Using these inequalities to the previous inequality one can obtain 
  \algns{
    \mc{B}_P((\vb, q, \qbar), (\vb', q', \qbar')) &\ge \frac{1}{2} \norm{\vb}_{\Vb_h}^2 + \LRp{1 - \e_1 \frac{(C C_1')^2}{C_{\tau}^2} } | q - \qbar |_{\tau, 0, \mc{F}_h}^2 \\
    &\quad + \delta \LRp{ C_0' - \frac 12 \delta \kappa_{\min}^{-1} (C_2')^2 -  \frac{\delta}{4 \e_1} } \norm{q}_0^2 .
  }
  If we choose sufficiently small $\e_1> 0$, and then 
  choose sufficiently small $\delta$ depending on $\kappa_{\min}$ and $\e_1$,
  we can obtain
  \begin{equation} \label{eq:poisson-Bh-coercive}
    \mc{B}_P(\vb, q, \qbar; \vb', q', \qbar') 
    \ge
    C_1 \LRp{ \norm{ \vb }_{\Vb_h}^2
      + \norm{ q }_{0}^2 + | q - \qbar |_{\tau, 0, \mc{F}_h}^2 }
  \end{equation}
  for some $C_1>0$ which approach $0$ as $C_{\tau} \ra 0$ or $\kappa_{\min} \ra 0$. 
  Finally, from the choice of $(\vb', q', \qbar')$, it is not difficult to derive
  \begin{align} \label{eq:test-bounded}
    \norm{ (\vb', q', \qbar') }_{\Xb_h} \le C_2 \norm{ ( \vb, q, \qbar) }_{\Xb_h},
  \end{align}
  with $C_2$ which depends only on $\delta$.  
  The conclusion follows from \eqref{eq:poisson-Bh-coercive} and \eqref{eq:test-bounded}.
\end{proof}

\subsection{The a priori error estimates}
\label{subsec:error_estimate}

In this subsection we show the a priori error estimates.
We recall that there is an interpolation $\bs{\Pi} = (\Pi_{\Vb}, \Pi_Q):H^1(K; \Bbb{R}^d) \times H^1(K) \ra \Vb (K) \times Q (K)$ defined by 
\algn{
\label{eq:poission-intp1} (\Pi_{\Vb} \vb, \vb')_K &= (\vb, \vb')_K & & \forall \vb' \in \mc{P}_{k-1}(K; \Bbb{R}^d), \\
\label{eq:poission-intp2} (\Pi_Q q, q')_K &= (q, q')_K & & \forall q' \in \mc{P}_{k-1}(K)  \\
\label{eq:poission-intp3} \LRa{ \Pi_{\Vb} \vb \cdot \nb_K + \tau \Pi_Q q , \lambda }_{\pd K} &= \LRa{ \vb \cdot \nb_K + \tau  q , \lambda }_{\pd K} & & \lambda \in \mc{P}_k (\pd K),
}
and it satisfies interpolation error estimates
\algn{
\label{eq:poisson-intp-estm-1} \norm{\vb - \Pi_{\Vb} \vb}_{0,K} &\le Ch_K^{k_{\vb} +1} | \vb |_{k_{\ub}+1, K} + C h_K^{k_q +1} \tau_K^* | q |_{k_q + 1, K}, \\
\label{eq:poisson-intp-estm-2}\norm{q - \Pi_Q q}_{0, K} &\le C h_K^{k_q + 1} | q |_{k_q +1, K} + C \frac{h_K^{k_{\vb}+1}}{\tau_{K}^{\max}} | \div \vb |_{k_{\vb}, K}
}
with $0 \le k_q, k_{\vb} \le k$ and constants $C$ independent of $h_K$ and $\tau$ \cite[Theorem~2.1]{cockburn}. Here, $\tau_K^{\max} = \max_{F \subset \pd K} \tau|_F$ and $\tau_K^* = \max_{F \subset \pd K \setminus F^*} \tau|_F$, where $F^*$ is the face that $\tau_K^{\max}$ is attained, and the implicit constants are independent of $K$ and $\tau$. We remark that \eqref{eq:poisson-intp-estm-1} and \eqref{eq:poisson-intp-estm-2}
give optimal order of approximations when $\tau = O(1)$ on $\pd K$. 
We also define $P_M$ as the $L^2$ projection to $M_h$.

\begin{theorem} Suppose that $\Vb_h$, $Q_h$, $M_h$ are defined by the spaces in \eqref{eq:poisson-spaces} with $k \ge 0$, 
and $\tau$ in \eqref{eq:poisson-HDG-eqs} satisfies the assumption in {\rm Theorem~\ref{thm:poisson-inf-sup}}. 
If $(\ub, p, \pbar)$ and $(\ub_h, p_h, \pbar_h)$ are the solutions of \eqref{eq:mixed-poisson} and  \eqref{eq:poisson-HDG-eqs}, respectively, and $(\ub, p) \in H^1(\Omega; \R^d) \times H^1(\Omega)$, then we have 
\algn{
\label{eq:poisson-estm1} \norm{\ub - \ub_h}_{\Vb_h} &\le 2 \norm{\ub - \Pi_{\Vb} \ub}_{\Vb_h} , \\
\label{eq:poisson-estm2} \norm{p - p_h}_{0} &\le \gamma_P^{-1} \norm{\ub - \Pi_{\Vb} \ub}_{\Vb_h} + \norm{ p - \Pi_Q p }_0 . 
}
%
\end{theorem}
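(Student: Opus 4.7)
The plan is to combine Galerkin orthogonality for $\mc{B}_P$ with the characterizing properties \eqref{eq:poission-intp1}--\eqref{eq:poission-intp3} of the HDG projection $\bs{\Pi}=(\Pi_{\Vb},\Pi_Q)$ and the inf-sup condition of Theorem~\ref{thm:poisson-inf-sup}. First I would verify consistency, namely that the exact solution satisfies $\mc{B}_P((\ub,p,\pbar),(\vb,q,\qbar)) = (f,q)$ for every discrete test triple. This reduces to elementwise integration by parts using that $p - \pbar \equiv 0$ on $\mc{F}_h$, that $\ub\cdot\nb$ is single-valued across interior faces, and that $\qbar = 0$ on $\mc{F}_h^D$, $\ub\cdot\nb = 0$ on $\mc{F}_h^N$. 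Subtracting \eqref{eq:poisson-HDG-eqs} then yields Galerkin orthogonality on $\Xb_h$.

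Next I would split the error through the projections by setting $e_\ub := \Pi_{\Vb}\ub - \ub_h$, $e_p := \Pi_Q p - p_h$, $e_{\pbar} := P_M\pbar - \pbar_h$. The heart of the proof is the clean identity
\[
\mc{B}_P((e_\ub,e_p,e_{\pbar}),(\vb,q,\qbar)) = a_P(\Pi_{\Vb}\ub - \ub,\vb), \qquad (\vb,q,\qbar)\in\Xb_h,
\]
which follows from Galerkin orthogonality once three cancellations are checked. First, $b_P((\Pi_Q p - p,P_M\pbar - \pbar),\vb) = 0$: integrating by parts, the volume piece $(\Pi_Q p - p,\div\vb)$ vanishes by \eqref{eq:poission-intp2} since $\div\vb \in \mc{P}_{k-1}$, and $\langle P_M\pbar - \pbar,\vb\cdot\nb\rangle$ vanishes face by face because $\vb\cdot\nb$ aggregates into $\mc{P}_k$ on each face while $P_M\pbar - \pbar$ is $L^2$-orthogonal to $M_h$ (and is zero on $\mc{F}_h^D$). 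Second, the identity \eqref{eq:poission-intp3} rewrites $\langle q - \qbar,(\Pi_{\Vb}\ub - \ub)\cdot\nb\rangle$ as $-\langle \tau(\Pi_Q p - p),q-\qbar\rangle$, while $(\grad q,\Pi_{\Vb}\ub - \ub) = 0$ by \eqref{eq:poission-intp1}; combining with $c_P((\Pi_Q p - p,P_M\pbar - \pbar),(q,\qbar))$ cancels the $\Pi_Q p - p$ contribution exactly and leaves only $\langle\tau(P_M\pbar - \pbar),q-\qbar\rangle$. Third, this leftover again vanishes by the face-by-face orthogonality argument, since $q - \qbar$ sums into $M_h$ on each face and $\tau$ is piecewise constant on $\mc{F}_h$.

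Once the reduced identity is established, the two error bounds follow quickly. For \eqref{eq:poisson-estm1} I would use the antisymmetric choice $(\vb,q,\qbar) = (e_\ub,-e_p,-e_{\pbar})$; the two $b_P$ pieces in $\mc{B}_P$ then cancel and $-c_P$ flips sign, yielding the coercive identity
\[
\norm{e_\ub}_{\Vb_h}^2 + |e_p - e_{\pbar}|_{\tau,0,\mc{F}_h}^2 = a_P(\Pi_{\Vb}\ub - \ub,e_\ub) \le \norm{\Pi_{\Vb}\ub - \ub}_{\Vb_h}\norm{e_\ub}_{\Vb_h},
\]
hence $\norm{e_\ub}_{\Vb_h} \le \norm{\ub - \Pi_{\Vb}\ub}_{\Vb_h}$, and the triangle inequality delivers \eqref{eq:poisson-estm1}. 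For \eqref{eq:poisson-estm2} I would apply Theorem~\ref{thm:poisson-inf-sup} to $(e_\ub,e_p,e_{\pbar})$; since $|a_P(\Pi_{\Vb}\ub - \ub,\vb)| \le \norm{\Pi_{\Vb}\ub - \ub}_{\Vb_h}\norm{(\vb,q,\qbar)}_{\Xb_h}$, the inf-sup condition gives $\gamma_P\norm{(e_\ub,e_p,e_{\pbar})}_{\Xb_h} \le \norm{\ub - \Pi_{\Vb}\ub}_{\Vb_h}$, which bounds $\norm{e_p}_0$, and a final triangle inequality closes \eqref{eq:poisson-estm2}.

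I expect the main obstacle to be the careful bookkeeping of the three cancellations above, in particular the vanishing of $\langle\tau(P_M\pbar - \pbar),q - \qbar\rangle$ across all three classes of faces (interior, $\mc{F}_h^D$, and $\mc{F}_h^N$). This depends critically on the exact match $M(F) = \mc{P}_k(F)$ and on the fact that both $q - \qbar$ and $\vb\cdot\nb$ aggregate into $M_h$ on each face; without this structural match an additional face-consistency term would remain and spoil the clean constants, notably the factor $2$ in \eqref{eq:poisson-estm1}.
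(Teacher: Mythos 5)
Your proposal is correct and follows essentially the same route as the paper: the same projection-based error splitting $e_{\sigma}=e_{\sigma}^I+e_{\sigma}^h$, the same reduced identity $\mc{B}_P((e_{\ub}^h,e_p^h,e_{\pbar}^h),(\vb,q,\qbar))=-\LRp{\kappab^{-1}e_{\ub}^I,\vb}$ obtained from the cancellation properties of $(\Pi_{\Vb},\Pi_Q,P_M)$, the antisymmetric test choice for \eqref{eq:poisson-estm1}, and the inf-sup condition of Theorem~\ref{thm:poisson-inf-sup} for \eqref{eq:poisson-estm2}. The only difference is that you spell out the three cancellations (which the paper merely lists), and your bookkeeping there is accurate.
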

\begin{remark}
The estimate \eqref{eq:poisson-estm1} is already proved in \cite{cockburn} but we include it here for completeness. The proof of \eqref{eq:poisson-estm2} without duality argument is the main contribution of the theorem. An estimate of $P_M p - \pbar_h$ can be obtained with the argument similar to the one in \cite{cockburn}. 
\end{remark}
\begin{proof}
For an unknown $\sigma$ we use $e_{\sigma}$ to denote $\sigma - \sigma_h$, the difference of the exact solution $\sigma$
and its numerical approximation $\sigma_h$. 
Adopting this notation the difference of variational equations \eqref{eq:poisson-eqs} and \eqref{eq:poisson-HDG-eqs}
give error equations 
\begin{subequations}
  \label{eq:poisson-err-eqs}
  \begin{align}
    \label{eq:poisson-err-eq1}
    \LRp{ \kappab^{-1} e_{\ub}, \vb }
    - \LRp{ e_p, \div \vb}
    + \LRa{ e_{\pbar}, \vb \cdot \nb }
    &= 0 ,  && \vb \in \Vb_h,
    \\
    \label{eq:poisson-err-eq2}
    \LRp{ e_{\ub}, \grad q}_{\Omega}
    - \LRa{ e_{\ub} \cdot \nb + \tau (e_p - e_{\pbar}), q }
    &= 0 , && q \in Q_h,
    \\
    \label{eq:poisson-err-eq3}
    \LRa{ e_{\ub} \cdot \nb + \tau (e_p - e_{\pbar}), \qbar}_{\partial \mc{T}_h \setminus \mathcal{F}_h^D}
    &=0 ,
         && \qbar \in M_{h} .
  \end{align}
\end{subequations}
Decomposing the errors as
\algn{
  \label{eq:poisson-err-decomp1} e_{\ub} &= e_{\ub}^I + e_{\ub}^h := \LRp{ \ub - \Pi_{\Vb} \ub} + \LRp{ \Pi_{\Vb} \ub - \ub_h}, \\
  \label{eq:poisson-err-decomp2} e_{p} &= e_{p}^I + e_{p}^h := \LRp{ p - \Pi_{Q} p} + \LRp{ \Pi_{Q} p - p_h}, \\
  \label{eq:poisson-err-decomp3} e_{\pbar} &= e_{\pbar}^I + e_{\pbar}^h := \LRp{ p - P_{M} \pbar} + \LRp{ P_{M} \pbar - \pbar_h},
}
one can observe cancellation properties
\algns{
  \LRp{ e_p^I, \div \vb} = 0, \qquad \LRa{ e_{\pbar}^I, \vb \cdot \nb} = 0, \qquad \LRp{ e_{\ub}^I, \grad q} = 0, \\
  \LRa{ e_{\ub} \cdot \nb + \tau (e_p^I - e_{\pbar}^I), q } = 0, \qquad \LRa{ e_{\ub} \cdot \nb + \tau (e_p^I - e_{\pbar}^I), \qbar } = 0
}
from the definitions of $\Pi_{\Vb}$, $\Pi_Q$, $P_M$, 
for any $\vb \in \Vb_h$, $q \in Q_h$, $\qbar \in M_h$. 
Regarding these reductions, the sum of the equations in \eqref{eq:poisson-err-eqs} results in 
\algn{ \label{eq:BP-id}
\mc{B}_P((e_{\ub}^h, e_p^h, e_{\pbar}^h), (\vb, q, \qbar) ) = - \LRp{ \bs{\kappa}^{-1} e_{\ub}^I, \vb} .
}

If we take $\vb = e_{\ub}^h$, $q = - e_p^h$, $\qbar = -e_{\pbar}^h$ in \eqref{eq:BP-id}, then we get 
\algns{
\norm{ e_{\ub}^h }_{\Vb_h}^2 + |( e_p^h, e_{\pbar}^h) |_{\tau, 0, \mc{F}_h}^2 &= - \LRp{ \bs{\kappa}^{-1} e_{\ub}^I, e_{\ub}^h } 
\le \norm{ e_{\ub}^I }_{\Vb_h} \norm{ e_{\ub}^h }_{\Vb_h} ,
}
so \eqref{eq:poisson-estm1} follows by the triangle inequality.

From the inf-sup condition \eqref{eq:poisson-Bh-inf-sup} there exists $(\vb, q, \qbar) \in \Xb_h$ with $\normw{(\vb, q, \qbar)}{\Xb_h} = 1$ such that 
\algns{ 
\normw{(e_{\ub}^h, e_p^h, e_{\pbar}^h) }{\Xb_h} &\le \LRp{\gamma_P - \e}^{-1} \mc{B}_P((e_{\ub}^h, e_p^h, e_{\pbar}^h), (\vb, q, \qbar) )
}
for any $0 < \e < \gamma_P$. Using \eqref{eq:BP-id}, the Cauchy--Schwarz inequality, and the arbitrariness of $\e$, we find that 
\algns{
\normw{(e_{\ub}^h, e_p^h, e_{\pbar}^h) }{\Xb_h} \le \gamma_P^{-1} | (\kappab^{-1} e_{\ub}^I, \vb) | \le \gamma_P^{-1} \| \ub - \Pi \ub \|_{\Vb_h} ,
}
so \eqref{eq:poisson-estm2} follows by the triangle inequality.

\end{proof}
%

\section{Extension to the Convection-Diffusion-Reaction Equations}
\label{sec:convection-diffusion}
In this section we consider an extension of the analysis for the Poisson equations to a model convection-diffusion-reaction equation.
In the model equation, we find $p$ satisfying
\begin{align*}
    - \bs{\kappa} \lap p  + \betab \cdot \grad p + cp &= f && \text{ in } \Omega, \\
    p &= p_D && \text{ on } \pd \Omega, 
\end{align*}
with the assumtions 
\begin{itemize}
  \item[(A1)] $\bs{\kappa}$ is symmetric positive definite and is constant on $\Omega$  
  \item[(A2)] $\betab \in W^{1,\infty}(\Omega; \R^d)$, $c \in L^\infty(\Omega)$, $f \in L^2(\Omega)$,  $p_D \in H^{\frac 12}(\pd \Omega)$ 
  \item[(A3)] $c_{\betab} := c - \frac 12 \div \betab \ge 0$
\end{itemize}
Taking $\ub = - \bs{\kappa} \grad p$ as an additional unknown, we have a system of equations which finds $\ub$ and $p$ satisfying
\begin{subequations}
  \label{eq:convection-diffusion}
  \algn{
    \bs{\kappa}^{-1} \ub + \grad p &= 0, \\
    \div \ub + \betab \cdot \grad p + cp &= f 
  }
\end{subequations}
with boundary condition $p = p_D$ on $\pd \Omega$.

\subsection{HDG methods for the convection-diffusion-reaction equations}
For HDG methods we define the discrete spaces $\Vb_h$, $Q_h$, $M_h$ as in \eqref{eq:poisson-Vh}, \eqref{eq:poisson-Qh}, \eqref{eq:poisson-Mh} with $\Gamma_D = \pd \Omega$.
As before we take $\pbar$ as the restriction of $p$ on $\mc{F}_h \setminus \mc{F}_h^D$ and $\pbar = 0$ on $\mc{F}_h^D$. 
If the solution $(\ub, p)$ of \eqref{eq:convection-diffusion} is in $H^1(\Omega; \Bbb{R}^d) \times H^1(\Omega)$, then 
by the integration by parts, one can show that the variational equations
\begin{subequations}
  \label{eq:convection-eqs}
  \begin{align}
    \label{eq:convection-eq1}
    &\LRp{ \bs{\kappa}^{-1} \ub, \vb }
    + \LRp{ \grad p, \vb}
    - \LRa{ p - \pbar, \vb \cdot \nb } = - \LRa{p_D, \vb \cdot \nb}_{\mathcal{F}_h^D}  ,
    \\
    \label{eq:convection-eq2}
    &\LRp{ \ub_h + \betab p , \grad q}
    - \LRp{ (c - \div \betab) p, q } \\
    \notag &\quad  - \LRa{ \ub \cdot \nb + (\betab \cdot \nb) \pbar + \tau (p - \pbar), q }  = - \LRp{ \tau p_D , q}_{\mathcal{F}_h^D} - \LRp{ f, q} ,
    \\
    \label{eq:convection-eq3}
    &\LRa{ \ub \cdot \nb + (\betab \cdot \nb) \pbar + \tau (p - \pbar), \qbar}_{\partial \mathcal{T}_h \setminus \mathcal{F}_h^D} = 0 
  \end{align}
\end{subequations}
hold for any piecewise constant function $\tau$ on $\mc{F}_h$ and for all $(\vb, q, \qbar) \in \Vb_h \times Q_h \times M_h$.

An HDG formulation of the convection-diffusion-reaction equation is to seek $(\ub_h, p_h, \pbar_h) \in \Vb_h \times Q_h \times M_h$ satisfying
\begin{subequations}
  \label{eq:convection-HDG-eqs}
  \begin{align}
    \label{eq:convection-HDG-eq1}
    &\LRp{ \bs{\kappa}^{-1} \ub_h, \vb }
    + \LRp{ \grad p_h, \vb}
    - \LRa{ p_h - \pbar_h, \vb \cdot \nb }_{\partial \mathcal{T}_h} = - \LRa{p_D, \vb \cdot \nb}_{\mathcal{F}_h^D}  ,
    \\
    \label{eq:convection-HDG-eq2}
    &\LRp{ \ub_h + \betab p_h , \grad q}
    - \LRp{ (c - \div \betab) p_h, q } \\
    \notag &\quad  - \LRa{ \ub_h \cdot \nb + (\betab \cdot \nb) \pbar_h + \tau (p_h - \pbar_h), q }_{\partial \mathcal{T}_h}  = - \LRp{ \tau p_D , q}_{\mathcal{F}_h^D} - \LRp{ f, q} ,
    \\
    \label{eq:convection-HDG-eq3}
    &\LRa{ \ub_h \cdot \nb + (\betab \cdot \nb) \pbar_h + \tau (p_h - \pbar_h), \qbar}_{\partial \mathcal{T}_h \setminus \mathcal{F}_h^D} = 0 
  \end{align}
\end{subequations}
for all $(\vb, q, \qbar) \in \Vb_h \times Q_h \times M_h$. For the stability of this HDG method we need an additional assumption on $\tau$:
\medskip
\begin{itemize}
  \item[(A4)] $\tau_{\betab} := \tau - \frac 12 \betab \cdot \nb > 0$ on $\mc{F}_h$ 
\end{itemize}
\medskip
We define the sum of the left-hand sides of \eqref{eq:convection-HDG-eqs} by $\mc{B}_C$. Recalling the definition of $\mc{B}_P$ in the previous section, one can see that 
%
\mltln{ 
  \label{eq:convection-diffusion-Bh}   \mc{B}_{C} ((\ub_h, p_h, \pbar_h), (\vb, q, \qbar)) 
 = \mc{B}_P ((\ub_h, p_h, \pbar_h), (\vb, q, \qbar) ) \\
  + \LRp{ \betab p_h, \grad q } - \LRp{ (c - \div \betab) p_h, q} - \LRa{ (\betab \cdot \nb) \pbar_h, q - \qbar } .
}
For the stability analysis we 
define $\tilde{\Xb}_h$ as $\Vb_h \times Q_h \times M_h$ with the norm 
\algn{ \label{eq:convection-Xh}
\norm{(\vb, q, \qbar)}_{\tilde{\Xb}_h}^2 = \norm{\vb}_{\Vb_h}^2 + \norm{q}_0^2 + | q - \qbar |_{\tau_{\betab}, 0, \mc{F}_h}^2 . 
}
In the sequel, $\tau_{\betab, \max}$ and $\tau_{\betab, \min}$ are the essential upper and lower bounds of $\tau_{\betab}$. 

Here we prove an inf-sup condition for the bilinear form $\mc{B}_C$ with $\tilde{\Xb}_h$.
\begin{theorem}    \label{thm:convection-inf-sup}
Suppose that {\rm (A1), (A2), (A3)} and {\rm (A4)} hold, and $\tau_{\betab, \min}$ satisfies $\tau_{\betab, \min} \ge C_{\tau} h^{\frac 12}$ with $C_{\tau}>0$ independent of $h$.  
For $\mc{B}_C$ in \eqref{eq:convection-diffusion-Bh} 
%
  \begin{align}
    \label{eq:convection-Bh-inf-sup}
    \inf_{(\vb, q, \qbar) \in \tilde{\Xb}_h } \sup_{(\vb', q', \qbar') \in \tilde{\Xb}_h }
    \frac{\mc{B}_C((\vb, q, \qbar), (\vb', q', \qbar'))}{\norm{ (\vb, q, \qbar) }_{\tilde{\Xb}_h} \norm{ (\vb', q', \qbar') }_{\tilde{\Xb}_h} } \ge \gamma_C
  \end{align}
holds with $\gamma_C >0$ independent of $h$. 
\end{theorem}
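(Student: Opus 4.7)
The plan is to mirror the proof of Theorem~\ref{thm:poisson-inf-sup}: given $(\vb, q, \qbar) \in \tilde{\Xb}_h$, I will construct a test triple $(\vb', q', \qbar') = (\vb + \delta \vb_0, -q, -\qbar)$, where $\vb_0 \in \Vb_h$ is supplied by Lemma~\ref{lemma:poisson-weak-inf-sup} applied with $\tau_{\betab}$ playing the role of $\tau$. The lemma concerns $b_P$ only and is insensitive to the choice of stabilization weight, so this substitution is legitimate and yields $\norm{\vb_0}_{0} \le C_2' \norm{q}_{0}$ together with the lower bound
\[
b_P((q, \qbar), \vb_0) \ge \norm{q}_{0}^2 - \frac{C_1'}{\tau_{\betab, \min}} | q - \qbar |_{\tau_{\betab}, \frac 12, \mc{F}_h} \norm{q}_{0}.
\]
The parameter $\delta > 0$ will be fixed at the end. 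The goal is to prove a coercivity bound $\mc{B}_C((\vb, q, \qbar), (\vb', q', \qbar')) \ge C_1 \norm{(\vb, q, \qbar)}_{\tilde{\Xb}_h}^2$ together with a continuity bound $\norm{(\vb', q', \qbar')}_{\tilde{\Xb}_h} \le C_2 \norm{(\vb, q, \qbar)}_{\tilde{\Xb}_h}$.

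The main new ingredient is the evaluation of $\mc{B}_C((\vb, q, \qbar), (\vb, -q, -\qbar))$. The $\mc{B}_P$ portion reproduces $\norm{\vb}_{\Vb_h}^2 + | q - \qbar |_{\tau, 0, \mc{F}_h}^2$ exactly as in the Poisson analysis. For the extra terms in \eqref{eq:convection-diffusion-Bh}, I integrate $-(\betab q, \grad q)$ element-wise to produce $\tfrac 12 (\div \betab, q^2) - \tfrac 12 \LRa{ \betab \cdot \nb, q^2 }_{\pd \mc{T}_h}$, then expand $q^2 = (q - \qbar)^2 + 2(q - \qbar)\qbar + \qbar^2$. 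The term $\LRa{ \betab \cdot \nb, \qbar^2 }_{\pd \mc{T}_h}$ vanishes because $\qbar$ is single-valued on interior faces with opposing normals and $\qbar = 0$ on $\mc{F}_h^D = \pd \Omega$, while the resulting $(q - \qbar)\qbar$ cross term cancels exactly against $\LRa{ (\betab \cdot \nb)\qbar, q - \qbar }$. Merging the volume pieces into $(c_{\betab} q, q)$ and collecting the face pieces with the $\mc{B}_P$ stabilization via $\tau - \tfrac 12 \betab \cdot \nb = \tau_{\betab}$ produces the identity
\[
\mc{B}_C((\vb, q, \qbar), (\vb, -q, -\qbar)) = \norm{\vb}_{\Vb_h}^2 + | q - \qbar |_{\tau_{\betab}, 0, \mc{F}_h}^2 + (c_{\betab} q, q),
\]
which, by (A3) and (A4), is bounded below by $\norm{\vb}_{\Vb_h}^2 + | q - \qbar |_{\tau_{\betab}, 0, \mc{F}_h}^2$. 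This is the step that forces the definition of $\tau_{\betab}$ used in the norm of $\tilde{\Xb}_h$.

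The remaining piece $\delta \mc{B}_C((\vb, q, \qbar), (\vb_0, 0, 0))$ collapses to $\delta (a_P(\vb, \vb_0) + b_P((q, \qbar), \vb_0))$, because each of the three added terms of $\mc{B}_C$ is linear in $(q', \qbar')$ and hence vanishes. Combining with the weak inf-sup gives a positive $\delta \norm{q}_{0}^2$ contribution at the cost of the term $\delta a_P(\vb, \vb_0)$, which Young-absorbs into $\tfrac 12 \norm{\vb}_{\Vb_h}^2 + \tfrac{\delta^2}{2} \kappa_{\min}^{-1} (C_2')^2 \norm{q}_{0}^2$, and of the cross term $\delta \tfrac{C_1'}{\tau_{\betab, \min}} | q - \qbar |_{\tau_{\betab}, \frac 12, \mc{F}_h} \norm{q}_{0}$, which is absorbed using $| q - \qbar |_{\tau_{\betab}, \frac 12, \mc{F}_h} \le C h^{\frac 12} | q - \qbar |_{\tau_{\betab}, 0, \mc{F}_h}$ together with the hypothesis $\tau_{\betab, \min} \ge C_\tau h^{\frac 12}$, so that the $h$-dependent ratio $h^{\frac 12}/\tau_{\betab, \min}$ is bounded by $C_\tau^{-1}$ uniformly in $h$. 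Choosing first a small $\e_1 > 0$ in the Young splitting of the cross term and then a sufficiently small $\delta$ (depending on $\kappa_{\min}$ and $\e_1$) delivers the coercivity. The continuity bound on $(\vb', q', \qbar')$ is then immediate from $\norm{\vb_0}_{0} \le C_2' \norm{q}_{0}$ and the triangle inequality.

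The principal obstacle I anticipate is the element-wise integration-by-parts book-keeping on faces: recognizing the exact cancellation of the $(q - \qbar)\qbar$ cross term against $\LRa{ (\betab \cdot \nb)\qbar, q - \qbar }$ and the vanishing of $\LRa{ \betab \cdot \nb, \qbar^2 }_{\pd \mc{T}_h}$, so that $\tau - \tfrac 12 \betab \cdot \nb = \tau_{\betab}$ emerges as the effective stabilization weight. Once this algebraic identity is established, the remainder of the argument is a direct replay of Theorem~\ref{thm:poisson-inf-sup} with $\tau$ replaced by $\tau_{\betab}$, and the $W^{1,\infty}$ and $L^\infty$ bounds on $\betab$ and $c$ in (A2) enter only to keep the resulting constants independent of $h$.
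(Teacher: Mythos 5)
Your proposal is correct and follows essentially the same route as the paper: the same test triple $(\vb+\delta\vb_0,-q,-\qbar)$ with $\vb_0$ from the weak inf-sup lemma applied with $\tau_{\betab}$ in place of $\tau$, and the same key algebraic identity reducing the convective terms to $(c_{\betab}q,q)-\tfrac12\LRa{(\betab\cdot\nb)(q-\qbar),q-\qbar}$ so that $\tau_{\betab}$ emerges as the effective stabilization weight. Your derivation of that identity via expanding $q^2=(q-\qbar)^2+2(q-\qbar)\qbar+\qbar^2$ is just a reorganization of the paper's use of $\tfrac12\LRa{\betab\cdot\nb\,\qbar,\qbar}=0$; the remainder matches the paper's replay of the Poisson argument.
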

\begin{proof}
As in the proof of Theorem~\ref{thm:poisson-inf-sup} we show that there are two constants $C_1$ and $C_2$ such that 
for any given $(\vb, q, \qbar) \in \tilde{\Xb}_h$ one can find $(\vb', q', \qbar') \in \tilde{\Xb}_h$ satisfying
\algns{
\mc{B}_C ((\vb, q, \qbar), (\vb', q', \qbar')) \ge C_1 \norm{(\vb, q, \qbar)}_{\tilde{\Xb}_h}^2, \quad 
\norm{(\vb', q', \qbar')}_{\tilde{\Xb}_h} \le C_2 \norm{(\vb, q, \qbar)}_{\tilde{\Xb}_h} .
} 

Before the proof, we first claim that 
\mltln{ \label{eq:convection-id-1}
 \LRp{ \betab q, \grad q } - \LRp{ (c - \div \betab) q, q} - \LRa{ (\betab \cdot \nb) \qbar, q - \qbar }  \\
 = - \LRp{ c_{\betab} q, q} + \frac 12 \LRa{ \betab \cdot \nb  (q - \qbar), q - \qbar } 
}
for $q \in Q_h$ and $\qbar \in M_h$.
To see it, note first that the integration by parts gives 
\algns{
  - (\betab q , \grad q ) = - \LRa{ \betab \cdot \nb q , q} + \LRp{ (\div \betab) q , q } + (\betab q , \grad q ),
}
so one can obtain 
\algn{ \label{eq:convection-id-2}
  (\betab q , \grad q ) = \frac 12 \LRa{ \betab \cdot \nb q , q} - \frac 12 \LRp{ (\div \betab) q , q } .
}
In addition, note the identity $\frac 12 \LRa{ \betab \cdot \nb \qbar, \qbar} = 0$ followed by the continuity of $\betab \cdot \nb$ and single-valuedness of $\qbar$ on $\mc{F}_h$. From this identity one can derive another identity
\algn{ \label{eq:convection-id-3}
  \frac 12 \LRa{ \betab \cdot \nb q , q } - \LRa{ (\betab \cdot \nb) \qbar, q - \qbar } = \frac 12 \LRa{ (\betab \cdot \nb) (q - \qbar), q - \qbar } .
}
Then one can show \eqref{eq:convection-id-1} using \eqref{eq:convection-id-2} and \eqref{eq:convection-id-3}. 

Let $(\vb, q, \qbar) \in \tilde{\Xb}_h$ be given. 
By applying a variant of Lemma~\ref{lemma:poisson-weak-inf-sup} such that $\tau$ and $\tau_{\min}$ are replaced by $\tau_{\betab}$ and $\tau_{\betab, \min}$, there is $\vb_0 \in \Vb_h$ satisfying $\norm{ \vb_0 }_{0} \le C_2' \norm{ q }_{0}$ and
\algns{
  b_P((q, \qbar), \vb_0)
  \ge C_0' \norm{ q }_{0}^2 - \frac{C_1'}{\tau_{\betab,\min}} | q - \qbar |_{\tau_{\betab}, \frac 12, \mc{F}_h} \norm{ q }_{0}
}
with positive constants $C_0'$, $C_1'$, $C_2'$ which are independent of $h$ and $(q, \qbar)$. We take $(\vb', q', \qbar') = (\vb + \delta \vb_0, -q, -\qbar)$ with $\delta >0$ which will be determined later.
From \eqref{eq:convection-diffusion-Bh} 
and \eqref{eq:convection-id-1}, one can find that 
\algns{
  \mc{B}_C ((\vb, q, \qbar), (\vb', q', \qbar') ) &= \norm{ \vb }_{\Vb_h}^2 + \delta a_P( \vb, \vb_0) + \delta b_P ((q, \qbar), \vb_0 ) \\
  &\quad + \LRa{ \LRp{\tau - (1/2) \betab \cdot \nb} ( q - \qbar), q - \qbar } + \LRp{ c_{\betab} q, q }   \\
  &\ge \norm{ \vb }_{\Vb_h}^2 + \delta a_P( \vb, \vb_0) + | q - \qbar |_{\tau_{\betab}, 0, \mc{F}_h}^2    \\
  &\quad + \delta C_0' \norm{ q }_{0}^2
  -  \delta \frac{C_1'}{\tau_{\betab,\min}} | q - \qbar |_{\tau_{\betab}, \frac 12, \mc{F}_h} \norm{ q }_{0} .
}
Note that this inequality is completely similar to the last form in \eqref{eq:BP-intermediate} with $\tau_{\betab}$ instead of $\tau$. We omit the rest steps because they are same as the proof of Theorem~\ref{thm:poisson-inf-sup}.
\end{proof}
\subsection{The a priori error estimates}
We show the a priori error estimates of the convection-diffusion-reaction equation.
\begin{theorem} \label{thm:convection-error-estm}
Suppose that $\Vb_h$, $Q_h$, $M_h$ are defined by the spaces in \eqref{eq:poisson-spaces} with $k \ge 0$, 
and the assumption in {\rm Theorem~\ref{thm:convection-inf-sup}} hold. 
If $(\ub, p, \pbar)$ and $(\ub_h, p_h, \pbar_h)$ are the solutions of \eqref{eq:convection-eqs} and \eqref{eq:convection-HDG-eqs}, respectively, and $(\ub, p) \in H^1(\Omega; \R^d) \times H^1(\Omega)$, then 
\mltln{ \label{eq:convection-estm-1}
\norm{ \ub - \ub_h}_{\Vb_h} +  \norm{p - p_h}_0 \\
\le  \LRp{ 1 + \gamma_C^{-1} } \norm{ \ub - \Pi_{\Vb} \ub}_{\Vb_h} + (1 + C_1) \norm{ p - \Pi_Q p }_0  + C_2  h^{\frac 12}  \norm{ \pbar - P_M \pbar }_{\tau_{\betab}, 0, \mc{F}_h} 
}
where
$C_1 = \gamma_C^{-1} \LRp{\norm{c_{\betab}}_{L^\infty(\Omega)} + C\norm{\betab}_{W^{1,\infty}(\Omega)} }$, $C_2 =C \tau_{\betab,\min}^{-\frac 12} \gamma_C^{-1} \norm{\betab}_{W^{1,\infty}(\Omega)}$
with $C>0$ independent of $c$, $\betab$, $\tau_{\betab}$, and $h$.
\end{theorem}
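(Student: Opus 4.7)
The plan is to imitate the Poisson error analysis but carry the extra convection-induced residuals through the inf-sup step. I begin by subtracting \eqref{eq:convection-HDG-eqs} from \eqref{eq:convection-eqs} to obtain the Galerkin orthogonality $\mc{B}_C((e_{\ub}, e_p, e_{\pbar}), (\vb, q, \qbar)) = 0$ for all test triples in $\Vb_h \times Q_h \times M_h$. Decomposing the error by $e_\sigma = e_\sigma^I + e_\sigma^h$ through $(\Pi_{\Vb}, \Pi_Q, P_M)$ and using the same interpolation cancellations as in the Poisson case, the $\mc{B}_P$ part of $\mc{B}_C((e^I), (\vb, q, \qbar))$ collapses to $a_P(e_{\ub}^I, \vb)$ exactly as in \eqref{eq:BP-id}. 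The convection pieces do not vanish, so I am left with
\algns{
\mc{B}_C((e_{\ub}^h, e_p^h, e_{\pbar}^h), (\vb, q, \qbar)) = -a_P(e_{\ub}^I, \vb) - (\betab e_p^I, \grad q) + ((c - \div \betab) e_p^I, q) + \LRa{(\betab \cdot \nb) e_{\pbar}^I, q - \qbar}.
}

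Next I apply Theorem~\ref{thm:convection-inf-sup}: for each $\ve \in (0, \gamma_C)$ I select $(\vb, q, \qbar) \in \tilde{\Xb}_h$ of unit norm that nearly realizes the inf-sup, yielding $\norm{(e_{\ub}^h, e_p^h, e_{\pbar}^h)}_{\tilde{\Xb}_h} \le (\gamma_C - \ve)^{-1} \mc{B}_C((e^h), (\vb, q, \qbar))$, and bound each of the four terms in the display in terms of interpolation errors only. The first is trivial from Cauchy--Schwarz. For the reaction term I write $\|c - \div\betab\|_{L^\infty} \le \|c_{\betab}\|_{L^\infty} + \tfrac12 \|\div\betab\|_{L^\infty}$, which contributes the $\|c_{\betab}\|_{L^\infty}$ piece in $C_1$. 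For $(\betab e_p^I, \grad q)$ I split $\betab = \bar\betab_K + (\betab - \bar\betab_K)$ on each element, where $\bar\betab_K$ is the $L^2$ projection onto $\mc{P}_0(K; \R^d)$: the constant part vanishes because $\bar\betab_K \cdot \grad q \in \mc{P}_{k-1}(K)$ is orthogonal to $e_p^I$ by \eqref{eq:poission-intp2}, and the oscillation is absorbed using $\|\betab - \bar\betab_K\|_{L^\infty(K)} \le C h_K \|\betab\|_{W^{1,\infty}(K)}$ together with the inverse inequality $\|\grad q\|_{0,K} \le C h_K^{-1} \|q\|_{0,K}$, producing the desired $C \|\betab\|_{W^{1,\infty}} \|e_p^I\|_0$ bound.

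The principal obstacle is the boundary convection term $\LRa{(\betab \cdot \nb) e_{\pbar}^I, q - \qbar}$: since $\betab \cdot \nb$ is not polynomial on faces, the orthogonality $e_{\pbar}^I \perp M(F)$ cannot be exploited directly. My plan is to subtract, on each $F \in \mc{F}_h$, the facewise average $\overline{(\betab \cdot \nb)}_F$. The averaged contribution equals $\overline{(\betab \cdot \nb)}_F \LRa{e_{\pbar}^I, q - \qbar}_F = 0$ because $(q - \qbar)|_F \in \mc{P}_k(F) = M(F)$ is orthogonal to $e_{\pbar}^I$, while the remainder obeys the oscillation estimate $\|(\betab \cdot \nb) - \overline{(\betab \cdot \nb)}_F\|_{L^\infty(F)} \le C h_F \|\betab\|_{W^{1,\infty}}$. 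A weighted Cauchy--Schwarz, splitting the weight $h_F = h_F^{1/2} \cdot h_F^{1/2}$ across the two factors and using $\tau_{\betab} \ge \tau_{\betab,\min}$ together with the pointwise estimate $|\cdot|_{\tau_{\betab}, 1/2, \mc{F}_h} \le h^{1/2} |\cdot|_{\tau_{\betab}, 0, \mc{F}_h}$, produces the target bound of the form $C h^{1/2} \tau_{\betab,\min}^{-1/2} \|\betab\|_{W^{1,\infty}} \|\pbar - P_M \pbar\|_{\tau_{\betab}, 0, \mc{F}_h}$ once $|q - \qbar|_{\tau_{\betab}, 0, \mc{F}_h} \le 1$ is applied. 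Summing the four bounds, passing $\ve \to 0$, and applying the triangle inequalities $\|\ub - \ub_h\|_{\Vb_h} \le \|e_{\ub}^I\|_{\Vb_h} + \|e_{\ub}^h\|_{\Vb_h}$ and $\|p - p_h\|_0 \le \|e_p^I\|_0 + \|e_p^h\|_0$ then delivers \eqref{eq:convection-estm-1}.
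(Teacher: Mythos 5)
Your proposal is correct and follows essentially the same route as the paper: Galerkin orthogonality, the $(\Pi_{\Vb},\Pi_Q,P_M)$ splitting with the same cancellations reducing the right-hand side to $a_P(e_{\ub}^I,\vb)$ plus the three convection/reaction residuals, the inf-sup condition of Theorem~\ref{thm:convection-inf-sup} with a near-optimizing unit-norm test triple, and term-by-term bounds. Your element-wise constant $\bar{\betab}_K$ and facewise average of $\betab\cdot\nb$ are exactly the projections $P_0$ and $\mathsf{P}_0$ that the paper inserts already at the stage of the reduced error equations, so the oscillation estimates and the resulting constants $C_1$, $C_2$ coincide with the paper's.
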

\begin{remark}
If $\ub$ and $p$ have sufficiently high regularities, then \eqref{eq:convection-estm-1} is optimal because $h^{\frac 12}  \norm{ \pbar - P_M \pbar }_{\tau_{\betab}, 0, \mc{F}_h} \le C \tau_{\betab, \max} h^{k+1} \norm{p}_{k+1}$ holds by an inverse trace inequality. 
\end{remark}
\begin{proof}[Proof of Theorem~\ref{thm:convection-error-estm}]
From the differences of \eqref{eq:convection-eqs} and \eqref{eq:convection-HDG-eqs}, the error equations of the convection-diffusion-reaction equation are
\begin{subequations}
  \label{eq:convection-err-eqs}
  \begin{align}
    \label{eq:convection-err-eq1}
    &\LRp{ \bs{\kappa}^{-1} e_{\ub}, \vb }
    + \LRp{ \grad e_p, \vb}
    - \LRa{ e_p - e_{\pbar}, \vb \cdot \nb } = 0   ,     \\
    \label{eq:convection-err-eq2}
    &\LRp{ e_{\ub} + \betab e_p , \grad q}
    - \LRp{ (c - \div \betab) e_p, q } \\
    \notag &\quad  - \LRa{ e_{\ub} \cdot \nb + (\betab \cdot \nb) e_{\pbar} + \tau (e_p - e_{\pbar}), q }  = 0 ,  \\
    \label{eq:convection-err-eq3}
    &\LRa{ e_{\ub} \cdot \nb + (\betab \cdot \nb) e_{\pbar} + \tau (e_p - e_{\pbar}), \qbar}_{\partial \mathcal{T}_h \setminus \mathcal{F}_h^D} = 0 
  \end{align}
\end{subequations}
for all $(\vb, q, \qbar) \in \Vb_h \times Q_h \times M_h$. If we decompose the errors as in \eqref{eq:poisson-err-decomp1}--\eqref{eq:poisson-err-decomp3}, 
then we obtain reduced error equations
\begin{subequations}
  \label{eq:convection-red-err-eqs}
  \begin{align}
    \label{eq:convection-red-err-eq1}
    &\LRp{ \bs{\kappa}^{-1} e_{\ub}^h, \vb }
    + \LRp{ \grad e_p^h, \vb}
    - \LRa{ e_p^h - e_{\pbar}^h, \vb \cdot \nb } = - \LRp{ \bs{\kappa}^{-1} e_{\ub}^I, \vb }   ,     \\
    \label{eq:convection-red-err-eq2}
    &\LRp{ e_{\ub}^h + \betab e_p^h , \grad q} - \LRp{ (c - \div \betab) e_p^h, q } \\
    \notag &\quad  - \LRa{ e_{\ub}^h \cdot \nb + (\betab \cdot \nb) e_{\pbar}^h + \tau (e_p^h - e_{\pbar}^h), q } \\
    \notag &\qquad  = -\LRp{ (\betab - P_0 \betab) e_p^I, \grad q} + \LRp{ (c - \div \betab) e_p^I, q } \\ 
    \notag &\qquad \quad + \LRa{(\betab \cdot \nb - {\mathsf P}_0 \betab \cdot \nb) e_{\pbar}^I, q }  ,  \\
    \label{eq:convection-red-err-eq3}
    &-\LRa{ e_{\ub}^h \cdot \nb + (\betab \cdot \nb) e_{\pbar}^h + \tau (e_p - e_{\pbar}), \qbar}_{\partial \mathcal{T}_h \setminus \mathcal{F}_h^D} \\
    \notag &\qquad = -\LRa{(\betab \cdot \nb - {\mathsf P}_0 \betab \cdot \nb) e_{\pbar}^I, \qbar }_{\partial \mathcal{T}_h \setminus \mathcal{F}_h^D} 
  \end{align}
\end{subequations}
where $P_0$ and ${\mathsf P}_0$ are the $L^2$ projections into the spaces of piecewise constant functions on $\mc{T}_h$ and $\mc{F}_h$, respectively. By adding the equations in \eqref{eq:convection-red-err-eqs}, we get 
\algns{
\mc{B}_C ((e_{\ub}^h, e_p^h, e_{\pbar}^h), (\vb, q, \qbar) ) &=  - \LRp{ \bs{\kappa}^{-1} e_{\ub}^I, \vb } - \LRp{ (\betab - P_0 \betab) e_p^I, \grad q} \\
&\qquad + \LRp{ (c - \div \betab) e_p^I, q }  + \LRa{(\betab \cdot \nb - {\mathsf P}_0 \betab \cdot \nb) e_{\pbar}^I, q - \qbar }  \\
&=: I_1(\vb) + I_2(q) + I_3(q) + I_4(q, \qbar) .
}
By the inf-sup condition \eqref{eq:convection-Bh-inf-sup} there exists $(\vb, q, \qbar) \in \tilde{\Xb}_h$ such that $\norm{(\vb, q, \qbar)}_{\tilde{\Xb}_h} \le 1$ and $\mc{B}_C ((e_{\ub}^h, e_p^h, e_{\pbar}^h), (\vb, q, \qbar) ) \ge (\gamma_C - \e) \norm{(e_{\ub}^h, e_p^h, e_{\pbar}^h)}_{\tilde{\Xb}_h}$ for any $0 < \e < \gamma_C$. If we use the above identity, then we can find 
\algn{ \label{eq:gamma-e-ineq}
\norm{(e_{\ub}^h, e_p^h, e_{\pbar}^h)}_{\tilde{\Xb}_h} &\le (\gamma_C - \e)^{-1} \mc{B}_C ((e_{\ub}^h, e_p^h, e_{\pbar}^h), (\vb, q, \qbar) ) \\
\notag &= (\gamma_C - \e)^{-1} \LRp{ I_1(\vb) + I_2(q) + I_3(q) + I_4(q, \qbar) } .
}
%

We now estimate the terms with the functionals $I_i$, $1 \le i \le 4$.
First, the Cauchy--Schwarz inequality gives 
\algn{ \label{eq:I1-estm}
|I_1 (\vb)| &\le \norm{\vb}_{\Vb_h} \norm{e_{\ub}^I}_{\Vb_h} . 
}
The H\"{o}lder inequality, the estimate $\norm{\betab - P_0 \betab }_{L^\infty(\Omega)} \le C h \norm{\betab}_{W^{1,\infty}(\Omega)}$, and an inverse inequality give 
\algn{ \label{eq:I2-estm}
|I_2 (q)| &\le \norm{\betab - P_0 \betab}_{L^\infty(\Omega)} \norm{e_p^I}_{0} \norm{\grad q}_{0} \le C\norm{\betab}_{W^{1,\infty}(\Omega)} \norm{e_p^I}_{0} \norm{ q}_{0} .
}
The triangle inequality, the H\"{o}lder inequality, the Cauchy--Schwarz inequality give
\algn{ \label{eq:I3-estm}
|I_3 (q)| &\le \left| \LRp{ c_{\betab} e_p^I, q } \right| + \frac 12 | \LRp{ \div \betab e_p^I, q } | \\
\notag &\le \LRp{  \norm{ c_{\betab} }_{L^\infty(\Omega)} + \frac 12 \norm{ \betab }_{W^{1,\infty}(\Omega)} } \norm{e_p^I}_0 \norm{ q }_0 .
}
Finally, the H\"{o}lder inequality and a trace inequality give
\algn{ \label{eq:I4-estm}
|I_4 (q, \qbar )| &\le \norm{ (\betab \cdot \nb - {\mathsf P}_0 \betab \cdot \nb) }_{L^\infty(\mc{F}_h)} \norm{ e_{\pbar}^I }_{1, 0, \mc{F}_h} \norm{ q - \qbar }_{1, 0, \mc{F}_h} \\
\notag &\le C \frac{ h^{\frac 12} }{\tau_{\betab, \min}^{1/2 }} \norm{\betab}_{W^{1,\infty}(\Omega)} \norm{ e_{\pbar}^I }_{1, 0, \mc{F}_h} \norm{ q - \qbar }_{\tau_{\betab}, 0, \mc{F}_h} .
}
Since $\e$ is arbitrary, by applying \eqref{eq:I1-estm}--\eqref{eq:I4-estm} to \eqref{eq:gamma-e-ineq}, one can find that 
\algns{
\norm{(e_{\ub}^h, e_p^h, e_{\pbar}^h)}_{\tilde{\Xb}_h} &\le \gamma_C^{-1} \LRp{  \norm{ e_{\ub}^I}_{\Vb_h} + \LRp{ \norm{c_{\betab}}_{L^\infty(\Omega)} + C\norm{\betab}_{W^{1,\infty}(\Omega)} } \norm{e_p^I}_{0}  } \\
&\quad  + \gamma_C^{-1} {  C \frac{ h^{\frac 12} }{\tau_{\betab, \min}^{1/2}} \norm{\betab}_{W^{1,\infty}(\Omega)} \norm{ e_{\pbar}^I }_{1, 0, \mc{F}_h} } .
}
The estimate \eqref{eq:convection-estm-1} follows by this estimate and the triangle inequality.
\end{proof}

\section{The Stokes equations}
\label{sec:stokes}
In this section we will present an analysis of HDG methods for the Stokes equations using a stabilized saddle point structure. As a consequence, we get optimal error estimates of all variables without elliptic regularity assumptions.

For $\bs{f} \in H^{-1}(\Omega; \R^d)$ and a constant $\nu >0$, a model Stokes problem for viscous incompressible Newtonian fluids is the boundary value problem
\algn{
\label{eq:stokes-1} - \div (\nu \grad \ub - p\, \Bbb{I}) &= \bs{f} & &  \text{ in } \Omega, \\
\label{eq:stokes-2} \div \ub &= 0 & & \text{ in } \Omega, \\
\label{eq:no-slip}  \ub &= 0 & &  \text{ on } \pd \Omega
}
where $\ub : \Omega \ra \R^d$ is a velocity field of fluid, $p : \Omega \ra \R$ is a pressure field, $\grad \ub$ is the row-wise gradient of $\ub$, and $\div$ in \eqref{eq:stokes-1} is the row-wise divergence operator.
Note that $\int_{\Omega} p \,\dif x = 0$ is a compatibility condition of $p$ from the no-slip boundary condition \eqref{eq:no-slip}. 

\subsection{HDG methods for the Stokes equations}
For HDG methods we introduce a matrix-valued unknown $\sigmab = \nu \grad \ub$ and 
rewrite \eqref{eq:stokes-1} and \eqref{eq:stokes-2} as 
\begin{subequations}
  \label{eq:mixed-stokes-eqs}
  \algn{
    \label{eq:mixed-stokes-eq1} \nu^{-1} \sigmab - \grad \ub &= 0, & & \text{ in } \Omega, \\
    \label{eq:mixed-stokes-eq2} - \div ( \sigmab  - p\, \Bbb{I}) &= \bs{f}, & & \text{ in } \Omega, \\
    \label{eq:mixed-stokes-eq3}  \div \ub&= 0, & & \text{ in } \Omega.
  }
\end{subequations}
%

Here we define finite element spaces for HDG methods.
As in the previous sections, we only consider finite element spaces which give equal order approximations
\begin{align}
\label{eq:stokes-Sigma_h}   \bs{\Sigma}_h &=  \LRb{ \taub \in L^2(\Omega; \R^{d \times d}) \;:\; \taub|_{K} \in \mathcal{P}_{k}(K; \R^{d \times d}) , \quad \forall K \in \mathcal{T}_h }, \\
\label{eq:stokes-V_h}   \Vb_h &= \LRb{ \vb \in L^2(\Omega; \R^d) \;:\; \vb|_{K} \in \mathcal{P}_{k}(K; \R^d), \quad \forall K \in \mathcal{T}_h }, \\
\label{eq:stokes-M_h}   \bs{M}_{h} &= \LRb{ \mu \in L^2(\mathcal{F}_h) \;:\; \mu|_{F} \in \mathcal{P}_{k}(F; \R^d), \quad \forall F \in \mathcal{F}_h, \quad \mu|_{\pd \Omega} = 0 }, \\
\label{eq:stokes-Q_h}   Q_h &= \LRb{ q \in L^2(\Omega) \;:\; \int_{\Omega} q \,\dif x = 0 \quad \text{ and } \quad q|_{K} \in \mathcal{P}_{k}(K) , \quad \forall K \in \mathcal{T}_h } .
\end{align}
%

Let $\ubbar$ be the restriction of $\ub$ on $\mc{F}_h$ for a solution $\ub$ in \eqref{eq:mixed-stokes-eqs}. 
If a solution $(\sigmab, \ub, p)$ of \eqref{eq:mixed-stokes-eqs} is in $H^1(\Omega; \R^{d \times d}) \times H^1(\Omega; \R^d) \times H^1(\Omega)$, then by the integration by parts, it satisfies the variational equations 
\begin{subequations}
\label{eq:stokes-eqs}
\algn{
\label{eq:stokes-eq1} \LRp{ \nu^{-1} \sigmab, \taub} - \LRp{ \grad \ub,  \taub} + \LRa{ \ub - \ubbar, \taub \nb } &= 0, \\
\label{eq:stokes-eq2} - \LRp{ \sigmab, \grad \vb} + \LRp{ p, \div \vb} + \LRa{ \sigmab \nb - p \nb - \mc{S} (\ub - \ubbar), \vb } &= - \LRp{ \bs{f}, \vb}, \\ 
\label{eq:stokes-eq3}  \LRp{ \div \ub, q} - \LRa{ \ub - \ubbar , q \nb} &= 0, \\
\label{eq:stokes-eq4} - \LRa{ \sigmab \nb - p \nb - \mc{S} (\ub - \ubbar), \vbbar } &= 0 
}
\end{subequations}
for a matrix-valued function $\mc{S}$ on $\mc{F}_h$ 
and for any $(\taub, \vb, \vbbar, q) \in \bs{\Sigma}_h \times \Vb_h \times \bs{M}_h \times Q_h$. 
Here we changed the signs of the second and fourth equations to clarify a symmetric structure of this system.

An HDG formulation for the Stokes system (cf. \cite{cockburn}) is to seek $\sigmab_h \in \bs{\Sigma}_h$, $\ub_h \in \Vb_h$, $\ubbar_h \in \bs{M}_h$, $p_h \in Q_h$ such that 
\begin{subequations}
\label{eq:stokes-HDG-eqs}
\algn{
\label{eq:stokes-HDG-eq1} \LRp{ \nu^{-1} \sigmab_h, \taub} - \LRp{ \grad \ub_h,  \taub} + \LRa{ \ub_h - \ubbar_h, \taub \nb } &= 0, \\
\label{eq:stokes-HDG-eq2} - \LRp{ \sigmab_h, \grad \vb} + \LRp{ p_h, \div \vb} + \LRa{ \sigmab_h \nb - p_h \nb - \mc{S} (\ub_h - \ubbar_h), \vb } &= - \LRp{ \bs{f}, \vb}, \\ 
\label{eq:stokes-HDG-eq3}  \LRp{ \div \ub_h, q} - \LRa{ \ub_h - \ubbar_h , q \nb} &= 0, \\
\label{eq:stokes-HDG-eq4} - \LRa{ \sigmab_h \nb - p_h \nb - \mc{S} (\ub_h - \ubbar_h), \vbbar } &= 0 
}
\end{subequations}
holds for any $(\taub, \vb, \vbbar, q) \in \bs{\Sigma}_h \times \Vb_h \times \bs{M}_h \times Q_h$.
Here $\mc{S}$ is a matrix-valued function defined on $\mc{F}_h$ which has a form
\algn{ \label{eq:S-tensor}
  \mc{S} = \tau_n \LRp{ \nb \otimes \nb } + \tau_t \LRp{ \Bbb{I} - \nb \otimes \nb}
}
with piecewise constant $\tau_n, \tau_t > 0$ where $\vb \otimes \wb$ is $\R^{d \times d}$-valued object defined by 
%
\algns{
\vb \otimes \wb = ( \vb_i \wb_j )_{1 \le i, j \le d}, \quad \vb = 
\pmat{ \vb_1 \cdots \vb_d}^T, \quad \wb = \pmat{ \wb_1 \cdots \wb_d}^T 
}
with $A^T$, the transpose of matrix $A$.

Here we define bilinear forms
\algn{
\label{eq:stokes-a} a_S \LRp{ \taub, \taub'} &= \LRp{ \nu^{-1} \taub, \taub'}, \\
\label{eq:stokes-b1} b_{1,S} ( \taub, (\vb, \vbbar)) &= - \LRp{ \grad \vb, \taub } + \LRa{ \vb - \vbbar, \taub \nb} \\
\label{eq:stokes-b2} b_{2,S} ( (\vb, \vbbar), q) &= \LRp{ \div \vb, q} - \LRa{ \vb - \vbbar, q \nb} , \\
\label{eq:stokes-c} c_S( (\vb, \vbbar) , (\vb', \vbbar') ) &= \LRa{\mc{S} (\vb - \vbbar), \vb' - \vbbar' } ,
}
and note that the last terms in  \eqref{eq:stokes-HDG-eq2} and \eqref{eq:stokes-HDG-eq4} can be combined as 
\algns{
 \LRa{ \nu \sigmab_h \nb - p_h \nb - \mc{S} (\ub_h - \ubbar_h), \vb - \vbbar }.
}
Then the sum of the left-hand sides of \eqref{eq:stokes-HDG-eqs} can be written as 
\algn{ \label{eq:Bs}
&\mc{B}_S \LRp{ (\sigmab_h, \ub_h, \ubbar_h, p_h), (\taub, \vb, \vbbar, q) } \\
\notag & \begin{matrix}
:=& a_S(\sigmab_h, \taub) & + b_{1,S} (\taub, (\ub_h, \ubbar_h) ) &  \\
&+ b_{1,S}(\sigmab_h, (\vb, \vbbar)) & & + b_{2,S} ( (\vb, \vbbar), p_h)  \\
& &+ b_{2,S}( (\ub_h, \ubbar_h), q) & - c_S((\ub_h, \ubbar_h), (\vb, \vbbar) ) .
\end{matrix} 
}
We define semi-norms on $\Vb_h \times \bs{M}_h$ as 
\algns{
| \vb - \vbbar |_{\Bbb{I}, s, \mc{F}_h}^2 &:= \sum_{K \in \mc{T}_h} h_K^{2s} \LRa{  \vb - \vbbar, \vb - \vbbar}_{\pd K}, \\
| \vb - \vbbar |_{\mc{S}, s, \mc{F}_h}^2 &:= \sum_{K \in \mc{T}_h} h_K^{2s} \LRa{ \mc{S}( \vb - \vbbar), \vb - \vbbar}_{\pd K}. 
}
%
%
\begin{lemma} \label{lemma:stokes-1}
Suppose that $\tau_n, \tau_t \ge \tau_{\min}$ with a constant $\tau_{\min}>0$ for $\mc{S}$ in \eqref{eq:S-tensor}. 
For $b_{1,S}(\cdot, \cdot)$ in \eqref{eq:stokes-b1} there exist $C_0, C_1>0$ independent of $h$ such that
\algns{
\inf_{(\vb, \vbbar) \Vb_h \times \bs{M}_h } \sup_{\taub \in \bs{\Sigma}_h} \frac{ b_{1,S}(\taub, (\vb, \vbbar)) }{\norm{\taub}_0} \ge C_0 \norm{ \vb }_0 - \frac{C_1}{\tau_{\min}} | \vb - \vbbar |_{\mc{S}, \frac 12, \mc{F}_h} .
}
\end{lemma}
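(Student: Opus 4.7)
The plan is to proceed in complete parallel with Lemma~\ref{lemma:poisson-weak-inf-sup}, the only new ingredient being a matrix-valued lift that takes the role of the vector field $\wb$ there. As before I would reformulate the inf-sup inequality as an existence statement: for each nonzero $(\vb, \vbbar) \in \Vb_h \times \bs{M}_h$, produce $\taub \in \bs{\Sigma}_h$ with $\norm{\taub}_0 \le C_2' \norm{\vb}_0$ and
\[
b_{1,S}(\taub, (\vb, \vbbar)) \ge \norm{\vb}_0^2 - \frac{C_1'}{\tau_{\min}} |\vb - \vbbar|_{\mc{S}, \frac 12, \mc{F}_h} \norm{\vb}_0.
\]

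To construct $\taub$, I would apply Lemma~\ref{lemma:div-surjective} componentwise to $\vb$ (with $\Gamma_D = \pd\Omega$ and $\Gamma_N = \emptyset$), producing vectors $\wb_i \in H^1(\Omega; \R^d)$ with $\div \wb_i = v_i$ and $\norm{\wb_i}_1 \le C_{\Omega} \norm{v_i}_0$. Stacking the $\wb_i$ as the rows of a matrix $\bs{W}$ gives $\bs{W} \in H^1(\Omega; \R^{d \times d})$ whose row-wise divergence equals $\vb$ and which satisfies $\norm{\bs{W}}_1 \le C_{\Omega} \norm{\vb}_0$. Taking $\taub := \Pi_h \bs{W}$, the $L^2$ projection of $\bs{W}$ into $\bs{\Sigma}_h$, furnishes the bound on $\norm{\taub}_0$ at once, so the work lies entirely in the first inequality. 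Row-wise integration by parts combined with $\div \bs{W} = \vb$ yields
\[
\norm{\vb}_0^2 = \LRp{\vb, \div \bs{W}} = -\LRp{\grad \vb, \bs{W}} + \LRa{\vb, \bs{W}\nb}.
\]
Two simplifications then apply: (i) since $\vb|_K \in \mc{P}_k(K; \R^d)$, the broken gradient $\grad \vb$ belongs to $\bs{\Sigma}_h$, so $\LRp{\grad \vb, \bs{W}} = \LRp{\grad \vb, \Pi_h \bs{W}}$; and (ii) single-valuedness of $\vbbar$ on interior faces, $\vbbar|_{\pd\Omega} = 0$, and continuity of $\bs{W}\nb_K$ across faces together give $\LRa{\vbbar, \bs{W}\nb} = 0$, so $\vb$ may be replaced by $\vb - \vbbar$ in the boundary integral. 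Adding and subtracting $\Pi_h \bs{W}$ in that term, I would arrive at
\[
\norm{\vb}_0^2 = b_{1,S}(\Pi_h \bs{W}, (\vb, \vbbar)) + \LRa{\vb - \vbbar, (\bs{W} - \Pi_h \bs{W}) \nb}.
\]

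The remainder is controlled exactly as in \eqref{eq:traceineqangles}: a standard trace and approximation estimate gives $\sum_K h_K^{-1} \norm{(\bs{W} - \Pi_h \bs{W})\nb}_{\pd K}^2 \le C \norm{\bs{W}}_1^2$, while the assumption $\tau_n, \tau_t \ge \tau_{\min}$ yields the pointwise inequality $\mc{S} \ge \tau_{\min} \Bbb{I}$ on each face, whence $\sum_K h_K \norm{\vb - \vbbar}_{\pd K}^2 \le \tau_{\min}^{-1} |\vb - \vbbar|_{\mc{S}, \frac 12, \mc{F}_h}^2$. Cauchy--Schwarz together with $\norm{\bs{W}}_1 \le C_{\Omega}\norm{\vb}_0$ then delivers the claimed bound, and a rearrangement completes the proof. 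I do not foresee any real obstacle; the only points worth care are the matrix-valued bookkeeping (the row-wise divergence must match the row-wise convention used by $\grad \vb$, and the sign-flip of $\bs{W}\nb_K$ across faces must mirror that of $\wb\cdot\nb_K$ in the Poisson proof) and the observation that $\bs{\Sigma}_h = \mc{P}_k(K; \R^{d \times d})$ is precisely what makes $\grad \vb \in \bs{\Sigma}_h$ hold, which is the Stokes analogue of the condition $\grad Q(K) \subset \Vb(K)$ highlighted in the remark following Lemma~\ref{lemma:poisson-weak-inf-sup}.
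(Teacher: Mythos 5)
Your proposal is correct and follows exactly the route the paper intends: the paper's own proof of this lemma simply states that it is the vector version of Lemma~\ref{lemma:poisson-weak-inf-sup} with the $\mc{S}$-weighted semi-norm replacing the $\tau$-weighted one and omits the details, which you have supplied faithfully (row-wise lift via Lemma~\ref{lemma:div-surjective}, $L^2$ projection into $\bs{\Sigma}_h$, the sign adjustment $\div\bs{W}=\vb$ to match the sign convention of $b_{1,S}$, and the bound $\mc{S}\ge\tau_{\min}\Bbb{I}$ to pass to the $\mc{S}$-weighted semi-norm).
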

\begin{proof}
This is a vector version of Lemma~\ref{lemma:poisson-weak-inf-sup}. The only required modification is to use the $\mc{S}$-weighted semi-norm of $(\vb, \vbbar)$ instead of the $\tau$-weighted norm. 
With the $\mc{S}$-weighted norm, the same proof of Lemma~\ref{lemma:poisson-weak-inf-sup} can be applied, so we omit details.
\end{proof}
Since \eqref{eq:Bs} has a dual saddle-point problem structure, 
we need an additional lemma for the stability proof. 
\begin{lemma} \label{lemma:stokes-2}
There exists $C_2 >0$ independent of $h$ such that, for any given $q \in Q_h$ one can find $(\vb, \vbbar) \in \Vb_h \times \bs{M}_h$ satisfying
  \algn{ \label{eq:v0-bound}
    b_{2,S}( (\vb, \vbbar), q) = \norm{q}_0^2, \qquad 
    \norm{\grad \vb}_0 + \norm{\vb}_0 + | \vb - \vbbar |_{\Bbb{I}, -\frac 12, \mc{F}_h }   \le C_2 \norm{q}_0 .
  }
%
\end{lemma}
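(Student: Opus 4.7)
The plan is to construct $(\vb, \vbbar)$ from an $H^1_0$ lifting of $q$. Since $q \in Q_h$ has zero mean on $\Omega$, a classical Bogovski\u\i/de Rham result (the homogeneous Dirichlet analogue of Lemma~\ref{lemma:div-surjective}, see e.g.\ \cite{Galdi:2011}) produces $\wb \in H^1_0(\Omega;\R^d)$ with $\div\wb = q$ and $\|\wb\|_1 \le C_\Omega \|q\|_0$. I then take
\[
 \vb := \Pi_{\Vb} \wb, \qquad \vbbar := P_M \wb,
\]
where $\Pi_{\Vb}$ is the element-wise $L^2$ projection onto $\Vb_h$ and $P_M$ is the face-wise $L^2$ projection onto $\bs{M}_h$. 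Because $\wb$ vanishes on $\pd\Omega$ in trace sense, $\vbbar$ satisfies the boundary constraint of $\bs{M}_h$.

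For the identity $b_{2,S}((\vb,\vbbar),q) = \|q\|_0^2$, element-wise integration by parts gives
\[
 b_{2,S}((\vb,\vbbar),q) = -(\vb, \grad q) + \langle \vbbar\cdot\nb, q \rangle_{\pd\mc{T}_h}.
\]
Since $\grad q|_K \in \mc{P}_{k-1}(K;\R^d) \subset \Vb_h|_K$, the $L^2$ projection property yields $(\Pi_{\Vb}\wb, \grad q) = (\wb, \grad q)$. A second integration by parts and $\div\wb = q$ then produce
\[
 b_{2,S}((\vb,\vbbar),q) = \|q\|_0^2 + \langle (\vbbar - \wb)\cdot\nb, q\rangle_{\pd \mc{T}_h}.
\]
Reorganizing the last term face-by-face, interior faces yield $\int_F (\vbbar - \wb)\cdot\nb_K [q]\,ds$, where $[q]$ is the jump of $q$ across $F$. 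Since $\nb_K$ is constant on the flat simplex $F$ and $[q]|_F \in \mc{P}_k(F)$, the vector $\nb_K [q]$ lies in $\mc{P}_k(F;\R^d) = \bs{M}_h|_F$, and the defining orthogonality of $P_M$ annihilates this contribution. On boundary faces both $\vbbar$ and the trace of $\wb$ vanish, so the residual term is zero, establishing the equality.

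For the norm bounds, $\|\vb\|_0 \le \|\wb\|_0 \le C_\Omega\|q\|_0$ is immediate from $L^2$-stability of $\Pi_{\Vb}$. For the broken gradient $\|\grad_h \vb\|_0$ I use an inverse inequality together with the identity $\Pi_{\Vb} c = c$ for constants: on each $K$,
\[
 \|\grad(\Pi_{\Vb}\wb)\|_{0,K} \le C h_K^{-1}\|\Pi_{\Vb}(\wb - \bar{\wb}_K)\|_{0,K} \le C h_K^{-1}\|\wb - \bar{\wb}_K\|_{0,K} \le C|\wb|_{1,K},
\]
using the local mean $\bar{\wb}_K$ and Poincar\'e, so summation gives $\|\grad_h \vb\|_0 \le C\|\wb\|_1$. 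For the semi-norm $|\vb - \vbbar|_{\Bbb{I},-1/2,\mc{F}_h}$ I insert $\wb$ via the triangle inequality and apply the trace inequality: for the $\vb - \wb$ piece, approximation of $\Pi_{\Vb}$ in $L^2$ combined with the $H^1$ bound on $\vb$ above yields $h_K^{-1}\|\vb - \wb\|_{0,\pd K}^2 \le C\|\wb\|_{1,K}^2$; for the $\wb - \vbbar$ piece, the face-wise best-approximation property of $P_M$ (using that constants lie in $\bs{M}_h|_F$) and a scaled trace inequality likewise give $h_K^{-1}\|\wb - P_M\wb\|_{0,\pd K}^2 \le C\|\wb\|_{1,K}^2$. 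Summing over $K$ and using $\|\wb\|_1 \le C_\Omega\|q\|_0$ completes the proof.

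The main delicate point is the exact identity $b_{2,S}((\vb,\vbbar),q) = \|q\|_0^2$; this rests on the simultaneous use of two different projections whose consistency ranges exactly match the polynomial degrees appearing: $\grad q$ has degree $k-1$ (controlled by $\Pi_{\Vb}$ into $\mc{P}_k$), while the jump $[q]$ times the constant normal has degree $k$ (controlled by $P_M$ into $\mc{P}_k$). The remaining norm estimates are routine once the lifting $\wb$ is in hand.
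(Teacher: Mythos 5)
Your proof is correct and follows essentially the same route as the paper: the same Bogovski\u{\i} lifting $\wb$, the same choice $\vb = \Pi_{\Vb}\wb$, $\vbbar = P_{\bs M}\wb$, the identity obtained by the same double integration by parts combined with the orthogonality of the two $L^2$ projections against $\grad q$ and $\nb\,[q]$, and the same stability/trace estimates for the norms. The only cosmetic difference is that for the facet semi-norm the paper compares $\vb-\vbbar$ directly to $\vb-\wb$ via the best-approximation property of $P_{\bs M}$ on each face, whereas you insert $\wb$ by the triangle inequality; both are fine.
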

\begin{proof}
By Lemma~\ref{lemma:div-surjective}, there exists $\wb \in H_0^1(\Omega; \R^d)$ such that $\div \wb = q$ and $\norm{\wb}_1 \le C_{\Omega} \norm{q}_0$ with $C_{\Omega}>0$ depending only on $\Omega$. Let $\vb$ and $\vbbar$ be the $L^2$ projections of $\wb$ in $\Vb_h$ and in $\bs{M}_h$, respectively. The integration by parts gives
\algns{
\norm{q}_0^2  &= \LRp{ q, \div \wb } = - \LRp{ \grad q, \wb} + \LRa{ q, \wb \cdot \nb } \\
&= - \LRp{ \grad q, \vb} + \LRa{ q, \vbbar \cdot \nb} =  \LRa{ q, \div \vb} - \LRa{ q \nb, \vb - \vbbar } = b_{2,S}( (\vb, \vbbar), q) .
}

To show the inequality in \eqref{eq:v0-bound}, note that $\norm{\vb}_0 \le \norm{\wb}_0$ holds 
and $\norm{\grad \vb}_0 \le C \norm{\wb}_1$ follows from an inverse inequality and the element-wise Poincar\'{e} inequality. 
Moreover, for a facet $F \subset \pd K$, 
$h_K^{-1} \LRa{ \vb - \vbbar, \vb - \vbbar}_F \le h_K^{-1} \LRa{ \vb - \wb , \vb - \wb }_F \le C \norm{\wb}_{1, K}^2$
holds with a constant $C>0$ depending on the shape regularity and the trace inequality. 
Taking its summation over $F \in \mc{F}_h$ gives $|\vb -\vbbar |_{\mathbb{I}, -\frac 12, \mc{F}_h} \le C \norm{q}_0$, so \eqref{eq:v0-bound} is proved. 
\end{proof}

Let $\Yb_h$ be the space $\bs{\Sigma}_h \times \Vb_h \times \bs{M}_h \times Q_h$ with the norm 
\algn{ \label{eq:stokes-Xh}
\norm{ (\taub, \vb, \vbbar, q) }_{\Yb_h}^2 = \LRp{ \nu^{-1} \taub, \taub} + \norm{\vb}_0^2 + | \vb - \vbbar |_{\mc{S}, 0, \mc{F}_h} + \norm{q}_0^2 . 
}
\begin{theorem} \label{thm:stokes-inf-sup-Bh}
Suppose that $\tau_t, \tau_n \ge \tau_{\min} \ge C_{\tau} h^{\frac 12}$ in \eqref{eq:S-tensor} on $\mc{F}_h$ for a constant $C_{\tau}>0$ independent of $h$. Then 
  \algns{
    \inf_{(\taub, \vb, \vbbar, q) \in \Yb_h} \sup_{(\taub', \vb', \vbbar', q') \in \Yb_h} \frac{ \mc{B}_S \LRp{ (\taub, \vb, \vbbar, q), (\taub', \vb', \vbbar', q') } } {\norm{(\taub, \vb, \vbbar, q) }_{\Yb_h} \norm{(\taub', \vb', \vbbar', q') }_{\Yb_h} } \ge \gamma_S  
  }
holds with $\gamma_S >0$ independent of $h$. 
\end{theorem}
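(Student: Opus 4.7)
The plan is to adapt the argument of Theorem~\ref{thm:poisson-inf-sup} to the enlarged saddle-point system that now includes the pressure $p_h$. Given $(\sigmab_h, \ub_h, \ubbar_h, p_h) \in \Yb_h$ I will exhibit a test quadruple $(\taub', \vb', \vbbar', q')$ that yields $\mc{B}_S((\sigmab_h,\ub_h,\ubbar_h,p_h),(\taub', \vb', \vbbar', q')) \ge C_1 \norm{(\sigmab_h, \ub_h, \ubbar_h, p_h)}_{\Yb_h}^2$ while also being bounded by $C_2 \norm{(\sigmab_h, \ub_h, \ubbar_h, p_h)}_{\Yb_h}$, from which the inf-sup bound follows with $\gamma_S = C_1/C_2$.

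The natural choice is
\[
  (\taub', \vb', \vbbar', q') = (\sigmab_h + \delta_1 \taub_0,\, -\ub_h + \delta_2 \vb_0,\, -\ubbar_h + \delta_2 \vbbar_0,\, p_h),
\]
where $\taub_0 \in \bs{\Sigma}_h$ is produced by Lemma~\ref{lemma:stokes-1} (in its equivalent form) applied to $(\ub_h, \ubbar_h)$ and $(\vb_0, \vbbar_0) \in \Vb_h \times \bs{M}_h$ is produced by Lemma~\ref{lemma:stokes-2} applied to $p_h$. Setting $\delta_1 = \delta_2 = 0$ first, the sign pattern $(+,-,-,+)$ of the resulting baseline test exploits the symmetric saddle-point structure of $\mc{B}_S$: the off-diagonal pieces $b_{1,S}(\sigmab_h,\cdot)$ and $b_{2,S}(\cdot, p_h)$ cancel against their transposes, leaving $\norm{\sigmab_h}_{a_S}^2 + |\ub_h - \ubbar_h|_{\mc{S}, 0, \mc{F}_h}^2$. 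Turning on $\delta_1 \taub_0$ then contributes $\delta_1 b_{1,S}(\taub_0, (\ub_h, \ubbar_h))$ (which by Lemma~\ref{lemma:stokes-1} provides the missing $\norm{\ub_h}_0^2$) plus the residual $\delta_1 a_S(\sigmab_h, \taub_0)$, and turning on $\delta_2(\vb_0, \vbbar_0)$ contributes $\delta_2 \norm{p_h}_0^2$ (from Lemma~\ref{lemma:stokes-2}) plus residuals $\delta_2 b_{1,S}(\sigmab_h, (\vb_0, \vbbar_0))$ and $-\delta_2 c_S((\ub_h, \ubbar_h), (\vb_0, \vbbar_0))$.

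The hard part will be controlling the new cross terms created by the pressure enrichment, which have no analogue in the Poisson proof. For $b_{1,S}(\sigmab_h, (\vb_0, \vbbar_0)) = -\LRp{\grad \vb_0, \sigmab_h} + \LRa{\vb_0 - \vbbar_0, \sigmab_h \bs{n}}$ I would combine the bound $\norm{\grad \vb_0}_0 + |\vb_0 - \vbbar_0|_{\Bbb{I}, -\frac 12, \mc{F}_h} \le C\norm{p_h}_0$ from Lemma~\ref{lemma:stokes-2} with the inverse trace inequality $h_K^{1/2}\norm{\sigmab_h \bs{n}}_{\pd K} \le C\norm{\sigmab_h}_K$ to obtain $|b_{1,S}(\sigmab_h, (\vb_0, \vbbar_0))| \le C \nu^{1/2} \norm{\sigmab_h}_{a_S} \norm{p_h}_0$. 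For $c_S((\ub_h, \ubbar_h), (\vb_0, \vbbar_0))$ I would pass from the $\Bbb{I}$-weighted $-\frac 12$ semi-norm of $\vb_0 - \vbbar_0$ to its $\mc{S}$-weighted $0$ semi-norm, paying a harmless $(\tau_{\max} h)^{1/2}$ factor, to get $|c_S((\ub_h, \ubbar_h), (\vb_0, \vbbar_0))| \le C(\tau_{\max} h)^{1/2}\norm{p_h}_0 |\ub_h - \ubbar_h|_{\mc{S}, 0, \mc{F}_h}$. The assumption $\tau_{\min} \ge C_\tau h^{\frac 12}$ is used exactly as in Theorem~\ref{thm:poisson-inf-sup} to dominate $\tau_{\min}^{-1}|\ub_h - \ubbar_h|_{\mc{S}, \frac 12, \mc{F}_h}$ by $C_\tau^{-1}|\ub_h - \ubbar_h|_{\mc{S}, 0, \mc{F}_h}$.

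After applying Young's inequality to each cross term, the parameters are chosen in the order: first $\e_1, \e_2$ small enough to absorb the $|\ub_h - \ubbar_h|_{\mc{S}, 0, \mc{F}_h}^2$ cross contributions; then $\delta_1$ small enough in terms of $\e_1$ and $\nu^{-1}$; and finally $\delta_2$ small enough in terms of $\nu$ and the remaining constants. This delivers $\mc{B}_S \ge C_1 \norm{(\sigmab_h, \ub_h, \ubbar_h, p_h)}_{\Yb_h}^2$, and $\norm{(\taub', \vb', \vbbar', q')}_{\Yb_h} \le C_2 \norm{(\sigmab_h, \ub_h, \ubbar_h, p_h)}_{\Yb_h}$ follows from the norm bounds on $\taub_0$ and $(\vb_0, \vbbar_0)$ in Lemmas~\ref{lemma:stokes-1} and~\ref{lemma:stokes-2}.
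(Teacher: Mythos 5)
Your proposal is correct and follows essentially the same route as the paper: the same test quadruple $(\taub + \delta\taub_0,\, -(\vb,\vbbar) + \epsilon(\vb_0,\vbbar_0),\, q)$ built from Lemmas~\ref{lemma:stokes-1} and \ref{lemma:stokes-2}, the same cancellation of the off-diagonal blocks, and the same treatment of the cross terms $a_S(\taub,\taub_0)$, $b_{1,S}(\taub,(\vb_0,\vbbar_0))$ and $c_S((\vb,\vbbar),(\vb_0,\vbbar_0))$ via inverse/trace inequalities, the weight transfer between the $\Bbb{I}$- and $\mc{S}$-seminorms, and Young's inequality with the condition $\tau_{\min}\ge C_\tau h^{1/2}$. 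The only differences are cosmetic (two named parameters $\delta_1,\delta_2$ in place of $\delta,\epsilon$ and a slightly different order of fixing them).
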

\begin{proof}
Given $(\taub, \vb, \vbbar, q) \in \bs{\Sigma}_h \times \Vb_h \times \bs{M}_h \times Q_h$, there exist $\taub_0 \in \bs{\Sigma}_h$ and $(\vb_0, \vbbar_0) \in \Vb_h \times \bs{M}_h$ such that
\gat{ \label{eq:stokes-tau0-ineq}
  \norm{\taub_0}_0 \le \norm{\vb}_0, \qquad \norm{\grad \vb_0}_0 + \norm{\vb_0}_0 + | \vb_0 - \vbbar_0 |_{\Bbb{I}, -\frac 12, \mc{F}_h }   \le C_2 \norm{q}_0 , \\
\label{eq:tau0-id} b_{1, S}(\taub_0, (\vb , \vbbar) ) \ge \LRp{ C_0 \norm{\vb}_0  - \frac{C_1}{\tau_{\min}} \norm{\vb - \vbbar }_{\mc{S}, \frac 12, \mc{F}_h} } \norm{\taub_0}_0 , \\
\label{eq:v0-id} b_{2, S}( (\vb_0, \vbbar_0), q ) = \norm{q }_0^2 
}
by Lemma~\ref{lemma:stokes-1} and Lemma~\ref{lemma:stokes-2}.
If we take $\taub' = \taub + \delta \taub_0$, $(\vb', \vbbar')  = - (\vb , \vbbar ) + \epsilon (\vb_0, \vbbar_0)$, $q' = q$ in \eqref{eq:Bs} and add the equations altogether, then we get
\algns{
&\mc{B}_S ((\taub , \vb , \vbbar, q), (\taub', \vb', \vbbar', q')) \\
&=
a_S(\taub , \taub ) + \delta a_S(\taub , \taub_0) + b_{1,S} ( \taub , (\vb, \vbbar) ) + \delta  b_{1,S} (\taub_0, (\vb, \vbbar) )&  \\
&\quad  - b_{1,S}( \taub, (\vb, \vbbar) ) + \epsilon b_{1, S}( \taub, (\vb_0, \vbbar_0) ) - b_{2,S} ( (\vb, \vbbar), q) + \epsilon b_{2,S} ( (\vb_0, \vbbar_0), q)  \\
&\quad + b_{2,S}( (\vb, \vbbar), q )  + c_S((\vb, \vbbar), (\vb, \vbbar) ) - \epsilon c_S((\vb, \vbbar), (\vb_0, \vbbar_0) ) \\
&\ge a_S( \taub, \taub) + \delta a_S(\taub, \taub_0)  + \delta  
\LRp{ C_0 \norm{\vb}_0^2  - \frac{C_1}{\tau_{\min}} |\vb - \vbbar |_{\mc{S}, \frac 12, \mc{F}_h} \norm{\vb}_0 }   \\
&\quad + \epsilon b_{1,S} (\nu \taub , (\vb_0, \vbbar_0) + \epsilon \norm{q}_0^2  + c_S((\vb, \vbbar), (\vb, \vbbar) ) - \epsilon c_S((\vb, \vbbar), (\vb_0, \vbbar_0) ) 
}
in which we used \eqref{eq:tau0-id} and \eqref{eq:v0-id} in the last inequality. 
By Young's inequality, we can obtain
\algns{
  \delta \left| a_S (\taub, \taub_0) \right| &\le \frac 14 a_S(\taub , \taub ) + \delta^2 a_S ( \taub_0, \taub_0 ) \le \frac 14 a_S(\taub, \taub ) + \delta^2 \nu^{-1} \norm{ \vb }_0^2  , \\
  \delta \frac{C_1}{\tau_{\min} } | \vb - \vbbar |_{\mc{S}, \frac 12, \mc{F}_h} \norm{\vb}_0 &\le  \delta \frac{ h C_1^2}{2\tau_{\min}^2 C_0} | \vb - \vbbar |_{\mc{S}, 0, \mc{F}_h}^2 + \frac 12 \delta C_0 \norm{\vb}_0^2 , \\
  \epsilon \left| b_{1,S} (\taub; (\vb_0, \vbbar_0)) \right| &\le \frac 14 a_S(\taub , \taub ) + C \epsilon^2 \LRp{ \LRp{ \nu \grad \vb_0, \grad \vb_0} +  | \vb_0 - \vbbar_0 |_{\Bbb{I}, - \frac 12, \mc{F}_h}^2 } \\
&\le \frac 14 a_S (\taub, \taub) + C C_2 \epsilon^2 \max \{ 1, \nu \} \norm{q}_0^2 , \\
  \epsilon \left| c_S((\vb, \vbbar), (\vb_0, \vbbar_0) \right|  &\le \frac 14 | \vb - \vbbar |_{\mc{S}, 0, \mc{F}_h}^2 + \epsilon^2 | \vb_0 - \vbbar_0 |_{\mc{S}, 0, \mc{F}_h}^2 \\
&\le  \frac 14 | \vb - \vbbar |_{\mc{S}, 0, \mc{F}_h}^2 + \bar{\tau} \epsilon^2 | \vb_0 - \vbbar_0 |_{\mc{S}, -\frac 12, \mc{F}_h}^2 \\
&\le  \frac 14 | \vb - \vbbar |_{\mc{S}, 0, \mc{F}_h}^2 + C_2 \bar{\tau} \epsilon^2 \norm{q}_0^2 
}
where 
\algn{ \label{eq:tau-bar}
\bar{\tau} := \max_{K \in \mc{T}_h} \{ \tau_t|_{\pd K} h_K, \tau_n|_{\pd K} h_K \} .
}
If we use these inequalities to the previous inequality of $\mc{B}_S ((\taub , \vb , \vbbar, q), (\taub', \vb', \vbbar', q'))$, 
then we can obtain 
\algn{
\label{eq:Bs-coercive} &\mc{B}_S ((\taub , \vb , \vbbar, q), (\taub', \vb', \vbbar', q')) \\
\notag \quad &\ge \frac 12 a_S ( \taub , \taub ) - \delta^2 \nu^{-1} \norm{\vb}_0^2 + \frac 12 \delta C_0 \norm{\vb}_0^2 - \delta \frac{h C_1^2}{2 \tau_{\min}^2 C_0} | \vb - \vbbar |_{\mc{S}, 0, \mc{F}_h}^2 \\
\notag & \quad + \epsilon \norm{q}_0^2  + | \vb - \vbbar |_{\mc{S}, 0, \mc{F}_h}^2 - (C \max \{ 1, \nu \} +\bar{\tau})  C_2 \epsilon^2 \norm{q}_0^2 .
}
If we choose $\epsilon$ and $\delta$ sufficiently small to satisfy 
\algn{ \label{eq:stokes-ineq-conditions}
\delta \LRp{ \frac 12 C_0 - \delta \nu^{-1} } >0, \qquad 1 - \delta \frac{h C_1^2}{2 \tau_{\min}^2 C_0} >0, \qquad \e \LRp{ 1 - (C \max \{ 1, \nu \} + \bar{\tau} ) \e } >0, 
}
then  
$\mc{B}_S ((\taub , \vb , \vbbar, q), (\taub', \vb', \vbbar', q')) \ge C \norm{(\taub , \vb , \vbbar, q)}_{\Yb_h}^2 $
holds with 
\algn{ \label{eq:stokes-inf-sup-constant}
C = \min \LRc{ \frac 12, \delta \LRp{ \frac 12 C_0 - \delta \nu^{-1} } , 1 - \delta \frac{h C_1^2}{2 \tau_{\min}^2 C_0} , \e \LRp{ 1 - (C \max \{ 1, \nu \} + \bar{\tau} ) \e } } .
}
One can also check that $\norm{(\taub', \vb', \vbbar', q')}_{\Yb_h} \le C \norm{(\taub, \vb, \vbbar, q)}_{\Yb_h}$ with another constant $C$ depending on $\epsilon$ and $\delta$. 
This complete the proof. 
\end{proof}
\begin{remark}
The formula \eqref{eq:stokes-ineq-conditions} in the proof of Theorem~\ref{thm:stokes-inf-sup-Bh} also shows that the inf-sup constant $\gamma_S$ can be independent of $h$ even in the cases that $\tau_t$, $\tau_n$ in \eqref{eq:S-tensor} depend on $h$. More precisely, $\gamma_S$ is independent of $h$ if $h/\tau_{\min}^2$ and $\bar{\tau}$ in \eqref{eq:tau-bar} are uniformly bounded in $h$.
\end{remark}

\subsection{The a priori error estimates} We show the a priori error estimates for the solutions of \eqref{eq:stokes-HDG-eqs}.
Note that there is an interpolation operator $\bs{\Pi} : H^1(K; \R^{d \times d}) \times H^1(K; \R^d) \times H^1(K) \ra \bs{\Sigma}(K) \times \Vb(K) \times Q(K)$, whose components will be denoted by 
$\bs{\Pi} (\taub, \vb, q) = (\Pi_{\bs{\Sigma}} \taub, \Pi_{\Vb} \vb, \Pi_Q q)$ 
satisfying 
\algn{
\label{eq:stokes-intp1} (\Pi_{\bs{\Sigma}} \taub, \taub')_K &= (\taub, \taub')_K & & \forall \taub' \in \mc{P}_{k-1}(K; \Bbb{R}^{d \times d}), \\
\label{eq:stokes-intp2} (\tr \Pi_{\bs{\Sigma}} \taub , \tr \taub')_K &= (\tr \taub, \tr \taub')_K & & \forall \taub' \in \mc{P}_{k}(K; \Bbb{R}^{d \times d}),  \\
\label{eq:stokes-intp3} (\Pi_{\Vb} \vb, \vb')_K &= (\vb, \vb')_K & & \forall \vb' \in \mc{P}_{k-1}(K; \Bbb{R}^d) , \\
\label{eq:stokes-intp4} (\Pi_{Q} q, q')_K &= (q, q')_K & & \forall q' \in \mc{P}_{k-1}(K ) , \\
\label{eq:stokes-intp5} \LRa{ \Pi_{\bs{\Sigma}} \taub \nb_K + \mc{S} \Pi_{\Vb} \vb + \Pi_Q q , \lambda }_{\pd K} &= \LRa{ \taub \nb_K + \mc{S} \vb + q , \lambda }_{\pd K} & & \lambda \in \mc{P}_k (\pd K; \R^d) .
}
It is known that $\bs{\Pi}$ satisfies 
\algns{
\norm{ \vb - \Pi_{\Vb} \vb }_{0,K} &\le C h_K^{k_{\vb} + 1} | \vb |_{k_{\vb} +1, K} + C \frac{ h_K^{k_{\taub, q}+1}}{\max \{ \tau_n, \tau_t \}} | \div \LRp{ \taub - q \Bbb{I} } |_{k_{\taub, q}, K}, \\
\norm{ \taub - \Pi_{\bs{\Sigma}} \taub }_{0,K} &\le C h_K^{k_{\taub}+1} | \taub |_{k_{\taub}+1, K} + C \tau_t \LRp{ \norm{\vb - \Pi_{\Vb} \vb}_{0,K} + h_K^{k_{\vb} +1} | \vb |_{k_{\vb} +1, K} }, \\
\norm{ q - \Pi_Q q }_{0,K} &\le C h_K^{k_q +1} | q |_{k_q +1, K} + \norm{ \taub - \Pi_{\bs{\Sigma}} \taub}_{0,K} + C h_K^{k_{\taub}+1} | \taub|_{k_{\taub} +1, K}
}
where $0 \le k_{\taub}, k_{\vb}, k_q, k_{\taub, q} \le k$ with the assumptions $\tr \taub = 0$ for the last two inequalities and $\div \vb = 0$ for the last inequality (cf. \cite[Theorem~2.3]{cockburn-stokes}). 
The approximation orders of these estimates are optimal when $\tau_t, \tau_n = O(1)$, and further discussions exploiting approximation orders for $h$-dependent $\tau_t$ and $\tau_n$ can be found in \cite{cockburn-stokes}.
We define $P_{\bs{M}} : L^2(\mc{F}_h) \ra \bs{M}_h$ as the $L^2$ projection into $\bs{M}_h$.
\begin{theorem} \label{thm:stokes-err-estm}
Suppose that the assumptions of Theorem~\ref{thm:stokes-inf-sup-Bh} hold. 
If $(\sigmab, \ub, \ubbar, p)$ and $(\sigmab_h, \ub_h, \ubbar_h, p_h)$ are solutions of \eqref{eq:stokes-eqs} and \eqref{eq:stokes-HDG-eqs}, respectively, then 
\algn{
\label{eq:stokes-error-estm-1} \norm{ \sigmab - \sigmab_h }_0 &\le 2 \norm{ \sigmab - \Pi_{\bs{\Sigma}} \sigmab }_0, \\
\label{eq:stokes-error-estm-2} \norm{ \ub - \ub_h }_0 + \norm{ p - p_h }_0 &\le \norm{ \ub - \Pi_{\Vb} \ub}_0 + \norm{ p - \Pi_{Q} p}_0 \\
\notag &\quad  + \gamma_S^{-1} \nu^{-\frac 12} \norm{\sigmab - \Pi_{\bs{\Sigma}} \sigmab }_0 .
}
\end{theorem}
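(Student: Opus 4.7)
The plan is to mirror the Poisson argument of Section~\ref{sec:poisson}. First I would derive Galerkin-orthogonality error equations by subtracting \eqref{eq:stokes-HDG-eqs} from \eqref{eq:stokes-eqs}, written in the compact form $\mc{B}_S((e_{\sigmab}, e_{\ub}, e_{\ubbar}, e_p), (\taub, \vb, \vbbar, q)) = 0$ for every $(\taub, \vb, \vbbar, q) \in \bs{\Sigma}_h \times \Vb_h \times \bs{M}_h \times Q_h$. Then I would split each error using the interpolations $\Pi_{\bs{\Sigma}}$, $\Pi_{\Vb}$, $\Pi_Q$, $P_{\bs{M}}$ into $e^I + e^h$, and invoke the defining relations \eqref{eq:stokes-intp1}--\eqref{eq:stokes-intp5} to show that the interpolation-error pieces $e_{\ub}^I$, $e_{\ubbar}^I$, $e_p^I$ drop out of $b_{1,S}$, $b_{2,S}$, and $c_S$ when tested against any element of the discrete space. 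This is the analogue of the cancellation list below \eqref{eq:poisson-err-decomp3}; the surviving identity should read
\begin{equation*}
\mc{B}_S((e_{\sigmab}^h, e_{\ub}^h, e_{\ubbar}^h, e_p^h), (\taub, \vb, \vbbar, q)) = - a_S(e_{\sigmab}^I, \taub) = - \LRp{\nu^{-1} e_{\sigmab}^I, \taub},
\end{equation*}
for all $(\taub, \vb, \vbbar, q) \in \Yb_h$.

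To obtain \eqref{eq:stokes-error-estm-1} I would exploit the symmetric saddle-point structure of $\mc{B}_S$ in \eqref{eq:Bs} by testing with $(\taub, \vb, \vbbar, q) = (e_{\sigmab}^h, -e_{\ub}^h, -e_{\ubbar}^h, e_p^h)$. With this sign pattern, the two $b_{1,S}$ contributions cancel, the two $b_{2,S}$ contributions cancel, and $-c_S$ contributes a nonnegative term $|e_{\ub}^h - e_{\ubbar}^h|_{\mc{S}, 0, \mc{F}_h}^2$, so the LHS collapses to $\nu^{-1}\norm{e_{\sigmab}^h}_0^2 + |e_{\ub}^h - e_{\ubbar}^h|_{\mc{S}, 0, \mc{F}_h}^2$. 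Cauchy--Schwarz on the RHS gives $\nu^{-1}\norm{e_{\sigmab}^h}_0^2 \le \nu^{-1} \norm{e_{\sigmab}^I}_0 \norm{e_{\sigmab}^h}_0$, hence $\norm{e_{\sigmab}^h}_0 \le \norm{e_{\sigmab}^I}_0$, and the triangle inequality yields the factor $2$ in \eqref{eq:stokes-error-estm-1}.

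For \eqref{eq:stokes-error-estm-2} I would use the inf-sup condition of Theorem~\ref{thm:stokes-inf-sup-Bh}: for any $\e \in (0, \gamma_S)$ there exists $(\taub, \vb, \vbbar, q) \in \Yb_h$ with $\norm{(\taub, \vb, \vbbar, q)}_{\Yb_h} \le 1$ such that
\begin{equation*}
  (\gamma_S - \e) \norm{(e_{\sigmab}^h, e_{\ub}^h, e_{\ubbar}^h, e_p^h)}_{\Yb_h} \le \mc{B}_S((e_{\sigmab}^h, e_{\ub}^h, e_{\ubbar}^h, e_p^h), (\taub, \vb, \vbbar, q)) = -\LRp{\nu^{-1} e_{\sigmab}^I, \taub}.
\end{equation*}
Bounding the last inner product by $\nu^{-\frac12} \norm{e_{\sigmab}^I}_0 \, \nu^{-\frac12}\norm{\taub}_0 \le \nu^{-\frac12}\norm{e_{\sigmab}^I}_0$ (since $\nu^{-\frac12}\norm{\taub}_0 \le \norm{(\taub, \vb, \vbbar, q)}_{\Yb_h} \le 1$) and letting $\e \downarrow 0$ yields $\norm{(e_{\sigmab}^h, e_{\ub}^h, e_{\ubbar}^h, e_p^h)}_{\Yb_h} \le \gamma_S^{-1} \nu^{-\frac12}\norm{e_{\sigmab}^I}_0$; in particular this controls $\norm{e_{\ub}^h}_0 + \norm{e_p^h}_0$, and triangle inequalities against $\norm{\ub - \Pi_{\Vb} \ub}_0$ and $\norm{p - \Pi_Q p}_0$ deliver \eqref{eq:stokes-error-estm-2}. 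The main obstacle is the cancellation step: I must check case-by-case that \eqref{eq:stokes-intp1}--\eqref{eq:stokes-intp5} and the single-valuedness of trace objects on $\mc{F}_h$ suffice to kill every interpolation-error contribution except the $a_S$ one, because the $\mc{S}$-weighted face pairing in \eqref{eq:stokes-intp5} couples $\Pi_{\bs{\Sigma}}$ with both $\Pi_{\Vb}$ and $\Pi_Q$ and must be carefully aligned with the definition of $c_S$ and the normal-component terms in $b_{1,S}$, $b_{2,S}$.
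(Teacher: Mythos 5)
Your proposal is correct and follows essentially the same route as the paper: the same $e^I+e^h$ splitting, the same reduction of the error equations to $\mc{B}_S((e_{\sigmab}^h,e_{\ub}^h,e_{\ubbar}^h,e_p^h),\cdot)=-\LRp{\nu^{-1}e_{\sigmab}^I,\taub}$, the same symmetric test choice for the $\sigmab$ bound, and the same use of Theorem~\ref{thm:stokes-inf-sup-Bh} for the $\ub$ and $p$ bounds. The only cosmetic difference is your sign $q=+e_p^h$ versus the paper's $q=-e_p^h$ in the first step; both work, since the $b_{2,S}$ contributions either cancel algebraically or vanish by the third error equation.
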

\begin{remark}
\eqref{eq:stokes-error-estm-1} and the estimate of $\norm{p - p_h}_0$ in \eqref{eq:stokes-error-estm-2} are obtained in \cite{cockburn-stokes} without elliptic regularity assumptions. However, the estimate of $\norm{\ub - \ub_h}_0$ in \eqref{eq:stokes-error-estm-2} without elliptic regularity assumptions is a new result. 
\end{remark}
\begin{proof}
From the difference of \eqref{eq:stokes-eqs} and \eqref{eq:stokes-HDG-eqs} 
we get the error equations 
\begin{subequations}
\label{eq:stokes-err}
\algn{
\label{eq:stokes-err-1} \LRp{ \nu^{-1} e_{\sigmab}, \taub} - \LRp{ \grad e_{\ub},  \taub} + \LRa{ e_{\ub} - e_{\ubbar}, \taub \nb } &= 0, \\
\label{eq:stokes-err-2} - \LRp{ e_{\sigmab}, \grad \vb} + \LRp{ e_p, \div \vb} + \LRa{ \nu e_{\sigmab} \nb - e_p \nb - \mc{S} (e_{\ub} - e_{\ubbar} ), \vb } &= 0, \\ 
\label{eq:stokes-err-3}  \LRp{ \div e_{\ub}, q} - \LRa{ e_{\ub} - e_{\ubbar} , q \nb} &= 0, \\
\label{eq:stokes-err-4} - \LRa{ e_{\sigmab} \nb - e_p \nb - \mc{S} (e_{\ub} - e_{\ubbar} ), \vbbar } &= 0 .
}
\end{subequations}
Splitting the errors as
\algns{
e_{\sigmab} &= e_{\sigmab}^I + e_{\sigmab}^h := \LRp{ \sigmab - \Pi_{\bs{\Sigma}}  \sigmab } + \LRp{ \Pi_{\bs{\Sigma}} \sigmab - \sigmab_h }, \\
e_{\ub} &= e_{\ub}^I + e_{\ub}^h := \LRp{ {\ub} - \Pi_{\Vb}  \ub } + \LRp{ \Pi_{\Vb} \ub - \ub_h } , \\
e_{\ubbar} &= e_{\ubbar}^I + e_{\ubbar}^h := \LRp{ {\ub} - P_{\bs{M}}  \ubbar } + \LRp{ P_{\bs{M}} \ub - \ubbar_h } , \\
e_{p} &= e_{p}^I + e_{p}^h := \LRp{ {p} - \Pi_{Q}  p } + \LRp{ \Pi_{Q} p - p_h } ,
} 
the properties of $P_{\bs{M}}$ and $\bs{\Pi}$ in \eqref{eq:stokes-intp1}--\eqref{eq:stokes-intp5} allow us to reduce \eqref{eq:stokes-err} to 
\begin{subequations}
\label{eq:stokes-red-err}
\algn{
\label{eq:stokes-red-err-1} \LRp{ \nu^{-1} e_{\sigmab}^h, \taub} - \LRp{ \grad e_{\ub}^h,  \taub} + \LRa{ e_{\ub}^h - e_{\ubbar}^h, \taub \nb } &= - \LRp{ \nu^{-1} e_{\sigmab}^I, \taub}, \\
\label{eq:stokes-red-err-2} - \LRp{ e_{\sigmab}^h, \grad \vb} + \LRp{ e_p^h, \div \vb} + \LRa{ e_{\sigmab}^h \nb - e_p^h \nb - \mc{S} (e_{\ub}^h - e_{\ubbar}^h ), \vb } &= 0, \\ 
\label{eq:stokes-red-err-3}  \LRp{ \div e_{\ub}^h, q} - \LRa{ e_{\ub}^h - e_{\ubbar}^h , q \nb} &= 0, \\
\label{eq:stokes-red-err-4} - \LRa{ e_{\sigmab}^h \nb - e_p^h \nb - \mc{S} (e_{\ub}^h - e_{\ubbar}^h ), \vbbar } &= 0 
}
\end{subequations}
where we used 
\algns{
- \LRp{ \grad e_{\ub}, \taub} + \LRa{ e_{\ub} - e_{\ubbar}, \taub \nb} &= \LRp{ e_{\ub}, \div \taub} - \LRa{ e_{\ubbar}, \taub \nb } \\
&= \LRp{ e_{\ub}^h, \div \taub} - \LRa{ e_{\ubbar}^h, \taub \nb } \\
&= - \LRp{ \grad e_{\ub}^h, \taub} + \LRa{ e_{\ub}^h - e_{\ubbar}^h, \taub \nb} 
}
in the first equation. 
The sum of these equations give 
\algn{ \label{eq:stokes-err-Bh}
\mc{B}_S ( ( e_{\sigmab}^h, e_{\ub}^h, e_{\ubbar}^h, e_p^h), (\taub, \vb, \vbbar, q)  ) = - \LRp{ \nu^{-1} e_{\sigmab}^I, \taub } .
}
If $\taub = e_{\sigmab}^h$, $\vb = - e_{\ub}^h$, $\ubar = - e_{\ubar}^h$, $q = -e_p^h$ in \eqref{eq:stokes-err-Bh}, then 
\algns{
\LRp{ \nu^{-1} e_{\sigmab}^h, e_{\sigmab}^h } + | e_{\ub}^h - e_{\ubbar}^h |_{\mc{S}, 0, \mc{F}_h}^2 = - \LRp{ \nu^{-1} e_{\sigmab}^I, e_{\sigmab}^h } .
}
By the Cauchy--Schwarz and the triangle inequalities, 
\eqref{eq:stokes-error-estm-1} follows.

We now estimate $\norm{e_{\ub}^h }_0$ and $\norm{ e_p^h }_0$.
By Theorem~\ref{thm:stokes-inf-sup-Bh}, for any $0 < \e < \gamma_S$, there exists $(\taub, \vb, \vbbar, q) \in \Yb_h$ such that 
$\norm{ (\taub, \vb, \vbbar, q) }_{\Yb_h} \le 1$ and 
\algns{
\norm{ \LRp{ e_{\sigmab}^h, e_{\ub}^h, e_{\ubbar}^h, e_p^h } }_{\Yb_h} &\le (\gamma_S - \e)^{-1} \mc{B}_S ( ( e_{\sigmab}^h, e_{\ub}^h, e_{\ubbar}^h, e_p^h), (\taub, \vb, \vbbar, q) ) \\
&= -(\gamma_S - \e)^{-1} \LRp{ \nu^{-1} e_{\sigmab}^I, \taub} .
}
Then 
$\norm{ \LRp{ e_{\sigmab}^h, e_{\ub}^h, e_{\ubbar}^h, e_p^h } }_{\Yb_h} \le (\gamma_S - \e)^{-1} \nu^{-\frac 12} \norm{ \sigmab - \Pi_{\bs{\Sigma}} \sigmab }_0$ by the Cauchy-Schwarz inequality. 
Since $\e$ is arbitrary, this inequality holds for $\e = 0$. Then \eqref{eq:stokes-error-estm-2} follows by the triangle inequality.
\end{proof}
%

\section{Extension to the Oseen equations}
\label{sec:oseen}
We extend the results of the Stokes equations to the Oseen equations.
Throughout this section we assume that $\betab$ is in $H(\div, \Omega)$
with $\div \betab = 0$ and $\betab|_K \in W^{1,\infty}(K; \R^d)$ for all $K \in \mc{T}_h$ with 
\algns{
\norm{\betab}_{W_h^{1,\infty}} := \max_{K \in \mc{T}_h} \norm{ \betab}_{W^{1,\infty}(K; \R^d)} < \infty .
}
The Oseen equation is
\algns{
 - \div ( \nu \grad \ub - \ub \otimes \betab - p\, \Bbb{I}) &= \bs{f}, & & \text{ in } \Omega, \\
 \div \ub&= 0, & & \text{ in } \Omega, 
}
with the boundary condition $\ub = 0$ on $\pd \Omega$. 
Taking $\sigmab = \nu \grad \ub$ as an additional unknown, one can obtain a system of first order equations
\begin{subequations}
\label{eq:oseen-strong}
\algn{
\label{eq:oseen-1} \nu^{-1} \sigmab - \grad \ub &= 0, & & \text{ in } \Omega, \\
\label{eq:oseen-2} - \div ( \sigmab - \ub \otimes \betab - p\, \Bbb{I}) &= \bs{f}, & & \text{ in } \Omega, \\
\label{eq:oseen-3}  \div \ub&= 0, & & \text{ in } \Omega .
}
\end{subequations}
Since $\div \betab = 0$, $\div (\ub \otimes \betab) = (\grad \ub) \betab$ holds 
where $(\grad \ub) \betab$ stands for the matrix-vector product.

\subsection{HDG methods for the Oseen equations}
We define $\bs{\Sigma}_h$, $\Vb_h$, $\bs{M}_h$, $Q_h$ as in \eqref{eq:stokes-Sigma_h}--\eqref{eq:stokes-M_h}.
Letting $\ubbar$ be the restriction of $\ub$ on $\mc{F}_h$, 
one can see by the integration by parts that the solution 
$(\sigmab, \ub, \ubbar, p)$ of \eqref{eq:oseen-strong} with sufficient regularity
satisfies the variational equations
\begin{subequations}
\label{eq:oseen-eqs}
\algn{
\label{eq:oseen-eq1} \LRp{ \nu^{-1} \sigmab, \taub} - \LRp{ \grad \ub,  \taub} + \LRa{ \ub - \ubbar, \taub \nb } &= 0, \\
\label{eq:oseen-eq2} - \LRp{ \sigmab - \ub \otimes \betab, \grad \vb} + \LRp{ p, \div \vb} & \\
\notag + \LRa{ \sigmab \nb - (\betab \cdot \nb) \ub - p \nb - \mc{S} (\ub - \ubbar), \vb } &= - \LRp{ \bs{f}, \vb}, \\ 
\label{eq:oseen-eq3}  \LRp{ \div \ub, q} - \LRa{ \ub - \ubbar , q \nb} &= 0, \\
\label{eq:oseen-eq4}  - \LRa{  \sigmab \nb - (\betab \cdot \nb) \ub - p \nb - \mc{S} (\ub - \ubbar), \vbbar } &= 0 
}
\end{subequations}
for any matrix-valued $\mc{S}$ on $\mc{F}_h$ and any $(\taub, \vb, \vbbar, q) \in \bs{\Sigma}_h \times \Vb_h \times \bs{M}_h \times Q_h$. 

An HDG method for the Oseen equation is to seek $(\sigmab_h, \ub_h, \ubbar_h, p_h) \in \bs{\Sigma}_h \times \Vb_h \times \bs{M}_h \times Q_h $ satisfying 
\begin{subequations}
\label{eq:oseen-HDG-eqs}
\algn{
\label{eq:oseen-HDG-eq1} \LRp{ \nu^{-1} \sigmab_h, \taub} - \LRp{ \grad \ub_h,  \taub} + \LRa{ \ub_h - \ubbar_h, \taub \nb } &= 0, \\
\label{eq:oseen-HDG-eq2} - \LRp{ \sigmab_h - \ub_h \otimes \betab, \grad \vb} + \LRp{ p_h, \div \vb} & \\
\notag + \LRa{ \sigmab_h \nb - (\betab \cdot \nb) \ubbar_h - p_h \nb - \mc{S} (\ub_h - \ubbar_h), \vb } &= - \LRp{ \bs{f}, \vb}, \\ 
\label{eq:oseen-HDG-eq3}  \LRp{ \div \ub_h, q} - \LRa{ \ub_h - \ubbar_h , q \nb} &= 0, \\
\label{eq:oseen-HDG-eq4}  - \LRa{ \sigmab_h \nb - (\betab \cdot \nb) \ubbar_h - p_h \nb - \mc{S} (\ub_h - \ubbar_h), \vbbar } &= 0 
}
\end{subequations}
for all $(\taub, \vb, \vbbar, q) \in \bs{\Sigma}_h \times \Vb_h \times \bs{M}_h \times Q_h$, where $\mc{S}$ is a function of the form \eqref{eq:S-tensor}.  
We assume 
\algn{ \label{eq:S-beta}
\mc{S}_{\betab} := \mc{S} - \frac 12 \betab \cdot \nb \Bbb{I}  \ge \tau_{\betab, n} \nb \otimes \nb + \tau_{\betab, t} (\mathbb{I} - \nb \otimes \nb) 
}
with $\tau_{\betab, n}, \taub_{\betab, t} > 0$ on $\mc{F}_h$, 
and define $\tilde{\Yb}_h$ by $\bs{\Sigma}_h \times \Vb_h \times Q_h \times \bs{M}_h$ 
with the norm 
\algns{
 \norm{(\taub, \vb, \vbbar, q)}_{\tilde{\Yb}_h}^2 := \LRp{ \nu^{-1} \taub, \taub} + \norm{\vb}_0^2 + \norm{q}_0^2 + | \vb - \vbbar |_{\mc{S}_{\betab}, 0, \mc{F}_h}^2 . 
}
Let us define $\mc{B}_O ((\sigmab_h, \ub_h, \ubbar_h, p_h), (\taub, \vb, \vbbar, q)) $ as the sum of all the bilinear forms on the left-hand sides of \eqref{eq:oseen-HDG-eqs}. 
\begin{theorem} \label{thm:oseen-inf-sup-Bh}
Suppose that $\mc{S}$ in \eqref{eq:oseen-HDG-eqs} satisfies \eqref{eq:S-beta} and $\tau_{\betab,t}, \tau_{\betab,n} \ge \tau_{\betab,\min}\ge C_{\tau} h^{\frac 12}$ with $C_{\tau} >0$ independent of $h$.
Then the inf-sup condition 
  \algns{
    \inf_{(\taub, \vb, \vbbar, q) \in \tilde{\Yb}_h} \sup_{(\taub', \vb', \vbbar', q') \in \tilde{\Yb}_h} \frac{ \mc{B}_O \LRp{ (\taub, \vb, \vbbar, q), (\taub', \vb', \vbbar', q') } } {\norm{(\taub, \vb, \vbbar, q) }_{\tilde{\Yb}_h} \norm{(\taub', \vb', \vbbar', q') }_{\tilde{\Yb}_h} } \ge \gamma_O 
  }
holds with $\gamma_O>0$ independent of $h$. 
\end{theorem}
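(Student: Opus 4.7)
The strategy is to combine the Stokes inf-sup argument of Theorem~\ref{thm:stokes-inf-sup-Bh} with the convection-absorbing identity used in Theorem~\ref{thm:convection-inf-sup}. The key algebraic fact is that, by element-wise integration by parts together with $\div \betab = 0$,
\begin{align*}
(\vb \otimes \betab, \grad \vb)_K = \frac{1}{2} \LRa{(\betab \cdot \nb) \vb, \vb}_{\pd K},
\end{align*}
while $\LRa{(\betab \cdot \nb) \vbbar, \vbbar} = 0$ by single-valuedness of $\vbbar$ on interior facets and $\vbbar = 0$ on $\pd \Omega$. Arguing as in \eqref{eq:convection-id-3}, these combine to
\begin{align*}
-(\vb \otimes \betab, \grad \vb) + \LRa{(\betab \cdot \nb) \vbbar, \vb - \vbbar} = -\frac{1}{2} \LRa{(\betab \cdot \nb)(\vb - \vbbar), \vb - \vbbar},
\end{align*}
so that when the stabilization $\LRa{\mc{S}(\vb - \vbbar), \vb - \vbbar}$ contributed by $-c_S$ is added, the total equals $\LRa{\mc{S}_{\betab}(\vb - \vbbar), \vb - \vbbar}$, which by \eqref{eq:S-beta} bounds the facet part $|\vb - \vbbar|_{\mc{S}_{\betab}, 0, \mc{F}_h}^2$ of the $\tilde{\Yb}_h$-norm.

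Given $(\taub, \vb, \vbbar, q) \in \tilde{\Yb}_h$, I would invoke an $\mc{S}_{\betab}$-version of Lemma~\ref{lemma:stokes-1} (its proof carries over verbatim with $\mc{S}, \tau_{\min}$ replaced by $\mc{S}_{\betab}, \tau_{\betab,\min}$) to obtain $\taub_0 \in \bs{\Sigma}_h$ with $\norm{\taub_0}_0 \le \norm{\vb}_0$ and the corresponding lower bound on $b_{1,S}(\taub_0, (\vb, \vbbar))$, and invoke Lemma~\ref{lemma:stokes-2} to obtain $(\vb_0, \vbbar_0) \in \Vb_h \times \bs{M}_h$ satisfying \eqref{eq:v0-bound}. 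Then test $\mc{B}_O$ with $(\taub + \delta \taub_0, -\vb + \e \vb_0, -\vbbar + \e \vbbar_0, q)$, with $\delta, \e > 0$ to be chosen. The expansion contains three kinds of contributions: the diagonal positive block $a_S(\taub,\taub) + |\vb - \vbbar|_{\mc{S}_{\betab}, 0, \mc{F}_h}^2 + \delta C_0 \norm{\vb}_0^2 + \e \norm{q}_0^2$ (the convective diagonal being handled by the identity above); the Stokes-type cross terms already absorbed in \eqref{eq:Bs-coercive}; and two genuinely new cross terms $\e (\vb \otimes \betab, \grad \vb_0)$ and $\e \LRa{(\betab \cdot \nb) \vbbar, \vb_0 - \vbbar_0}$ coming from the off-diagonal convection.

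The main obstacle is bounding these two new cross terms while preserving $h$-independence. For the volume term, \eqref{eq:v0-bound} gives $|\e (\vb \otimes \betab, \grad \vb_0)| \le C \e \norm{\betab}_{W_h^{1,\infty}} \norm{\vb}_0 \norm{q}_0$, which Young's inequality splits between $\frac{1}{4}\delta C_0 \norm{\vb}_0^2$ and a $\norm{q}_0^2$ piece. For the facet term, I would write $\vbbar = \vb - (\vb - \vbbar)$: the $\vb$-piece is controlled via a discrete trace inequality and $|\vb_0 - \vbbar_0|_{\Bbb{I}, -\frac{1}{2}, \mc{F}_h} \le C \norm{q}_0$ by $C \e \norm{\betab}_{W_h^{1,\infty}} \norm{\vb}_0 \norm{q}_0$, while the $(\vb - \vbbar)$-piece satisfies
\begin{align*}
|\e \LRa{(\betab \cdot \nb)(\vb - \vbbar), \vb_0 - \vbbar_0}| \le C\e \norm{\betab}_{W_h^{1,\infty}} \tau_{\betab,\min}^{-1/2} h^{1/2} |\vb - \vbbar|_{\mc{S}_{\betab}, 0, \mc{F}_h} \norm{q}_0,
\end{align*}
where the factor $h^{1/2} \tau_{\betab,\min}^{-1/2}$ stays bounded under $\tau_{\betab,\min} \ge C_\tau h^{1/2}$. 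Young's inequality absorbs half of this into $\frac{1}{2}|\vb - \vbbar|_{\mc{S}_{\betab},0,\mc{F}_h}^2$ and the remainder into the $\e \norm{q}_0^2$ slack. Choosing $\delta, \e$ sufficiently small, with restrictions analogous to \eqref{eq:stokes-ineq-conditions} augmented by a smallness coming from $\norm{\betab}_{W_h^{1,\infty}}$, then yields $\mc{B}_O \ge \gamma_O \norm{(\taub,\vb,\vbbar,q)}_{\tilde{\Yb}_h}^2$, and $\norm{(\taub',\vb',\vbbar',q')}_{\tilde{\Yb}_h} \le C \norm{(\taub,\vb,\vbbar,q)}_{\tilde{\Yb}_h}$ follows exactly as in the Stokes proof.
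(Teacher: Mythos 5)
Your proposal is correct and follows essentially the same route as the paper: the identity $(\vb\otimes\betab,\grad\vb)=\tfrac12\LRa{(\betab\cdot\nb)\vb,\vb}$ combined with single-valuedness of $\vbbar$ to fold the convective diagonal into the $\mc{S}_{\betab}$-weighted facet term, the same test element built from the $\mc{S}_{\betab}$-versions of Lemmas~\ref{lemma:stokes-1} and \ref{lemma:stokes-2}, and the same splitting $\vbbar=\vb-(\vb-\vbbar)$ with the inverse trace inequality and the factor $h^{1/2}\tau_{\betab,\min}^{-1/2}$ to control the two new convective cross terms. The only cosmetic difference is that you convert $|\vb-\vbbar|_{\Bbb{I},\frac12,\mc{F}_h}$ to the $\mc{S}_{\betab}$-seminorm before applying Young's inequality, whereas the paper absorbs it at the end via the condition $\e C^2 h\norm{\betab}_{L^\infty(\Omega)}^2\Bbb{I}\le\tfrac12\mc{S}_{\betab}$; these are equivalent.
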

\begin{proof}

Before we begin proof, note an identity for later use. The integration by parts and an algebraic identity give
\algns{
\LRp{ \ub \otimes \betab, \grad \vb} &= \LRa{ (\betab \cdot \nb) \ub, \vb}  - \LRp{  (\grad \ub) \betab, \vb} = \LRa{ (\betab \cdot \nb) \ub, \vb}  - \LRp{  \vb \otimes \betab , \grad \ub} 
}
for any $\ub, \vb \in \Vb_h$ because $\div \betab = 0$.
If $\ub = \vb$, then we can obtain 
\algn{ \label{eq:stokes-identity}
\LRp{ \vb \otimes \betab, \grad \vb} = \frac 12 \LRa{ (\betab \cdot \nb) \vb, \vb} .
}
We also note that, by recalling $\mc{B}_S$ in \eqref{eq:Bs}, one can find that
\algns{
\mc{B}_O ((\taub, \vb, \vbbar, q), (\taub', \vb', \vbbar', q')) &= \mc{B}_S ((\taub, \vb, \vbbar, q), (\taub', \vb', \vbbar', q')) \\
&\quad + \LRp{ \vb \otimes \betab, \grad \vb'} - \LRa{ (\betab \cdot\nb) \vbbar , \vb' - \vbbar'} .
}

We begin the inf-sup condition proof. For given $(\taub, \vb, \vbbar, q) \in \tilde{\Yb}_h$, let $\taub_0 \in \bs{\Sigma}_h$ and $(\vb_0, \vbbar_0) \in \Vb_h \times \bs{M}_h$ be the elements determined by Lemma~\ref{lemma:stokes-1} with $\mc{S}_{\betab}$ instead of $\mc{S}$ and Lemma~\ref{lemma:stokes-2}, which satisfy
\gat{ \label{eq:oseen-tau0-ineq}
  \norm{\taub_0}_0 \le \norm{\vb}_0, \qquad \norm{\grad \vb_0}_0 + \norm{\vb_0}_0 + | \vb_0 - \vbbar_0 |_{\Bbb{I}, -\frac 12, \mc{F}_h }   \le C_2 \norm{q}_0 , \\
\label{eq:oseen-tau0-id} b_{1, S}(\taub_0, (\vb , \vbbar) ) \ge \LRp{ C_0 \norm{\vb}_0  - \frac{C_1}{\tau_{\betab,\min}} \norm{\vb - \vbbar }_{\mc{S}_{\betab}, \frac 12, \mc{F}_h} } \norm{\taub_0}_0 , \\
\label{eq:oseen-v0-id} b_{2, S}( (\vb_0, \vbbar_0), q ) = \norm{q }_0^2 .
}
If we take $\taub' = \taub + \delta \taub_0$, $(\vb', \vbbar') = - (\vb, \vbbar) + \e (\vb_0, \vbbar_0)$, $q' = - q$ with constants $\delta$ and $\e$ which will be determined later, then we have
\algn{
\notag &\mc{B}_O ((\taub, \vb, \vbbar, q), (\taub + \delta \taub_0, -\vb + \e \vb_0 , - \vbbar + \e \vbbar_0, -q ))  \\
\label{eq:Bo-equality} &=\mc{B}_S ((\taub, \vb, \vbbar, q), (\taub + \delta \taub_0, -\vb + \e \vb_0 , - \vbbar + \e \vbbar_0, -q )) \\
\notag &\quad - \LRs{ \LRp{ \vb \otimes \betab, \grad \vb} - \LRa{ (\betab \cdot\nb) \vbbar , \vb - \vbbar} } \\
\notag &\quad + \e \LRs{ \LRp{ \vb \otimes \betab, \grad \vb_0} - \LRa{ (\betab \cdot\nb) \vbbar , \vb_0 - \vbbar_0} } \\
\notag &=: I_1 + I_2 + I_3 .
}

If we use \eqref{eq:stokes-identity} to $I_2$, then we have
\algn{ \label{eq:I2-id}
I_2 &= - \frac 12 \LRa{ (\betab \cdot \nb) \vb, \vb } + \LRa{ (\betab \cdot \nb) \vb, \vb - \vbbar} \\
\notag &= - \frac 12 \LRa{ (\betab \cdot \nb) ( \vb - \vbbar) , \vb - \vbbar} = - | \vb - \vbbar |_{\betab \cdot \nb \Bbb{I}, 0, \mc{F}_h}^2
}
where we used the fact $\frac 12 \LRa{ (\betab \cdot \nb) \vbbar, \vbbar } = 0$ for the second equality, which follows from the continuity of $\betab \cdot \nb$ on $\mc{F}_h$ and the single-valuedness of $\vbbar$ on $\mc{F}_h$.
Note that an inequality corresponding to \eqref{eq:Bs-coercive} with $\mc{S}_{\betab}$ instead of $\mc{S}$, holds for $I_1$. 
Using this inequality and \eqref{eq:I2-id}, one can obtain
\algns{
I_1 + I_2 &\ge \frac 12 a_S ( \taub , \taub ) - \delta^2 \nu^{-1} \norm{\vb}_0^2 + \frac 12 \delta C_0 \norm{\vb}_0^2 - \delta \frac{h C_1^2}{2 \tau_{\betab, \min}^2 C_0} | \vb - \vbbar |_{\mc{S}_{\betab}, 0, \mc{F}_h}^2 \\
\notag & \quad + \epsilon \norm{q}_0^2  + | \vb - \vbbar |_{\mc{S}_{\betab}, 0, \mc{F}_h}^2 - (C \max \{ 1, \nu \} + \bar{\tau}_{\betab})  C_2 \epsilon^2 \norm{q}_0^2 
}
with $\bar{\tau}_{\betab}  = \max_{K \in \mc{T}_h} \{ \tau_{\betab,t}|_{\pd K} h_K, \tau_{\betab, n}|_{\pd K} h_K \}$.

To estimate $I_3$, note the inequalities
\algns{
\left| \LRp{ \vb \otimes \betab , \grad \vb_0 } \right| &\le \norm{\betab}_{L^\infty(\Omega)} \norm{\vb}_0 \norm{\grad \vb_0 }_0 ,\\
\left| \LRa{ (\betab \cdot \nb) \vbbar, \vb_0 - \vbbar_0 } \right| &\le \left| \LRa{ (\betab \cdot \nb) (\vb - \vbbar), \vb_0 - \vbbar_0 } \right| + \left| \LRa{ (\betab \cdot \nb) \vb, \vb_0 - \vbbar_0 } \right| \\
&\le C \norm{ \betab \cdot \nb }_{L^\infty(\mc{F}_h)} \LRp{ \norm{\vb}_0 + | \vb - \vbbar |_{\mathbb{I}, \frac 12, \mc{F}_h} }  | \vb_0 - \vbbar_0 |_{\mathbb{I}, -\frac 12, \mc{F}_h}  
}
where we used the inverse trace inequality $\sum_{K \in \mc{T}_h} \sum_{F \subset \pd K} h_F \norm{\vb}_{L^2(F)}^2 \le C \norm{\vb}_0^2$
in the last inequality. From the above two inequalities and \eqref{eq:oseen-tau0-ineq}  one can obtain
\algns{
\left| I_3 \right| &\le C \e \LRp{ \norm{\vb}_0 + | \vb - \vbbar |_{\mathbb{I}, \frac 12, \mc{F}_h} } \norm{q}_0 \norm{\betab}_{L^\infty(\Omega)} \\
&\le \frac {\e}{2} \norm{q}_0^2 + \frac {\e}2 C^2 \norm{\betab}_{L^\infty(\Omega)}^2 \LRp{ \norm{\vb}_0 + | \vb - \vbbar |_{\mathbb{I}, \frac 12, \mc{F}_h} }^2 .
}
Combining these estimates of $I_1 + I_2$ and $I_3$, with \eqref{eq:Bo-equality}, we have 
\algns{
\notag &\mc{B}_O ((\taub, \vb, \vbbar, q), (\taub + \delta \taub_0, -\vb + \e \vb_0 , - \vbbar + \e \vbbar_0, -q ))  \\
&\ge \frac 12 a_S ( \taub, \taub) - \delta^2 \nu^{-1} \norm{\vb}_0^2 + \frac 12 \delta C_0 \norm{\vb}_0^2 - \delta \frac{h C_1^2}{2 \tau_{\betab, \min}^2 C_0} | \vb - \vbbar |_{\mc{S}_{\betab}, 0, \mc{F}_h}^2 \\
& \quad + \frac{\e}{2} \norm{q}_0^2  + | \vb - \vbbar |_{\mc{S}_{\betab} , 0, \mc{F}_h}^2 - (C \max \{ 1, \nu\} + \bar{\tau}_{\betab})  C_2 \epsilon^2 \norm{q}_0^2 \\
& \quad - \frac{\e}{2} C^2 \norm{\betab}_{L^\infty(\Omega)}^2 \LRp{ \norm{\vb}_0 + | \vb - \vbbar |_{\mathbb{I}, \frac 12, \mc{F}_h} }^2 .
}
If we choose sufficiently small $\delta, \e >0$ such that 
\algns{
 \mc{S}_{\betab} - \delta \frac{h C_1^2}{2 \tau_{\betab, \min}^2 C_0} \mc{S}_{\betab} - \e C^2 h \norm{\betab}_{L^\infty(\Omega)}^2  \mathbb{I} &\ge \frac 12 \mc{S}_{\betab} ,\\
 \delta \LRp{ \frac 12 C_0 - \delta \nu^{-1} } - \e C^2 \norm{\betab}_{L^\infty(\Omega)}^2 &> 0, \\
 \e \LRp{ \frac 12 - \e \LRp{C \max \{ 1, \nu\} + \bar{\tau}_{\betab}} } &> 0 ,
}
then there exists $C>0$ such that 
\algns{
\notag &\mc{B}_O ((\taub, \vb, \vbbar, q), (\taub + \delta \taub_0, -\vb + \e \vb_0 , - \vbbar + \e \vbbar_0, -q ))  
\ge C \norm{(\taub, \vb, \vbbar, q)}_{\tilde{\Yb}_h}^2 .
}
One can also see that 
$\norm{ (\taub', \vb', \vbbar', q') }_{\tilde{\Yb}_h} \le C' \norm{(\taub, \vb, \vbbar, q) }_{\tilde{\Yb}_h}$, 
with $C'$ depending only on $\delta$ and $\e$. This completes the proof.
\end{proof}

\subsection{The a priori error estimates} We show the a priori error estimates for the solutions of \eqref{eq:oseen-HDG-eqs}.

%

\begin{theorem}
Suppose that the assumptions in Theorem~\ref{thm:oseen-inf-sup-Bh} hold. 
If $(\sigmab, \ub, \ubbar, p)$ and $(\sigmab_h, \ub_h, \ubbar_h, p_h)$ are solutions of 
\eqref{eq:oseen-eqs} and \eqref{eq:oseen-HDG-eqs}, respectively, then 
\algn{
\| \sigmab - \sigmab_h \|_0 &\le \| \sigmab - {\Pi}_{\bs{\Sigma}} \sigmab \|_0 + \nu^{\frac 12} \mc{E}_h , \\
\norm{ \ub - \ub_h }_0 &\le \| \ub - {\Pi}_{\Vb} \ub \|_0 + \mc{E}_h , \\
\norm{ p - p_h }_0 &\le \| p - {\Pi}_{Q} p \|_0 + \mc{E}_h 
}
hold with 
\algns{
\mc{E}_h :=&  \gamma_O^{-1} \nu^{-\frac 12}  \| \sigmab - {\Pi}_{\bs{\Sigma}} \sigmab \|_0 + C \gamma_O^{-1} \nu^{-\frac 12} \norm{\betab}_{W_h^{1,\infty}}  \| \ub - {\Pi}_{\Vb} \ub \|_0 \\
&+ \gamma_O^{-1} \tau_{\betab, \min}^{-\frac 12} \norm{\betab}_{W_h^{1,\infty}} h^{\frac 12} \norm{ \ub - P_{\bs{M}} \ub }_{\mathbb{I}, 0, \mc{F}_h}.
}

\end{theorem}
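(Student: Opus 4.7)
The plan is to adapt the Stokes argument from Theorem~\ref{thm:stokes-err-estm}, borrowing the $P_0$-projection bookkeeping used in the proof of Theorem~\ref{thm:convection-error-estm} to absorb the convective contributions. First I would subtract \eqref{eq:oseen-HDG-eqs} from \eqref{eq:oseen-eqs} to produce a homogeneous error system in $(e_{\sigmab}, e_{\ub}, e_{\ubbar}, e_p)$, then split each error as $e_* = e_*^I + e_*^h$ with
\[
e_{\sigmab}^I = \sigmab - \Pi_{\bs{\Sigma}}\sigmab,\quad e_{\ub}^I = \ub - \Pi_{\Vb}\ub,\quad e_{\ubbar}^I = \ubbar - P_{\bs{M}}\ubbar,\quad e_p^I = p - \Pi_Q p,
\]
and $e_*^h$ the corresponding discrete remainders in the HDG spaces.

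Next, the orthogonality relations \eqref{eq:stokes-intp1}--\eqref{eq:stokes-intp5} of $\bs{\Pi}$ and the $L^2$-orthogonality of $P_{\bs{M}}$ kill every interpolation contribution in the Stokes part of the system exactly as in the proof of Theorem~\ref{thm:stokes-err-estm}, leaving only the term $-(\nu^{-1} e_{\sigmab}^I, \taub)$ from \eqref{eq:oseen-eq1}. The two convective pieces, namely $-\LRp{e_{\ub}^I \otimes \betab, \grad \vb}$ from the volume integral and $-\LRa{(\betab\cdot\nb)\, e_{\ubbar}^I, \vb - \vbbar}$ from the facet integrals, do not vanish; following the trick used in \eqref{eq:I2-estm}--\eqref{eq:I4-estm} I would replace $\betab$ by $\betab - P_0\betab$ in the first (using $\grad\vb \in \mc{P}_{k-1}(K;\R^{d\times d})$ together with $e_{\ub}^I \perp \mc{P}_{k-1}$) and replace $\betab\cdot\nb$ by $\betab\cdot\nb - \mathsf{P}_0(\betab\cdot\nb)$ on each facet (using $e_{\ubbar}^I \perp \bs{M}_h$), each of which costs an $L^\infty$-factor of order $h \norm{\betab}_{W_h^{1,\infty}}$.

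Summing the reduced equations then yields an identity of the form
\[
\mc{B}_O\bigl((e_{\sigmab}^h, e_{\ub}^h, e_{\ubbar}^h, e_p^h),(\taub,\vb,\vbbar,q)\bigr) = J_1(\taub) + J_2(\vb) + J_3(\vb,\vbbar),
\]
and the inf-sup condition of Theorem~\ref{thm:oseen-inf-sup-Bh} bounds $\norm{(e_{\sigmab}^h, e_{\ub}^h, e_{\ubbar}^h, e_p^h)}_{\tilde{\Yb}_h}$ by $\gamma_O^{-1}$ times a supremum of the right-hand side over the unit ball of $\tilde{\Yb}_h$. I would estimate $|J_1(\taub)|$ by Cauchy--Schwarz to produce the $\nu^{-1/2}\norm{e_{\sigmab}^I}_0$ term, $|J_2(\vb)|$ by combining H\"older with an inverse inequality on $\grad \vb$ so that the $h$-factor from $\betab - P_0\betab$ cancels the $h^{-1}$ from the inverse inequality, and $|J_3(\vb,\vbbar)|$ by a trace inequality together with the $\mc{S}_{\betab}$-weighting, giving exactly the $\tau_{\betab,\min}^{-1/2} h^{1/2}$ scaling already seen in \eqref{eq:I4-estm}. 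The three advertised estimates then follow by the triangle inequality, with the $\nu^{1/2}$ factor in the $\sigmab$-bound coming from converting the $\Vb_h$-norm on $\bs{\Sigma}_h$ back to $L^2$.

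The main obstacle I expect is precisely this $L^\infty$/$L^2$/trace bookkeeping for the convective residuals: identifying which piece of $\ub\otimes\betab$ and $(\betab\cdot\nb)\ubbar$ survives projection, and selecting the right inverse and trace inequalities so that no inverse power of $h$ is lost against the norms in $\tilde{\Yb}_h$. Once this is organised the remainder of the argument is a mechanical combination of the Stokes cancellations in Theorem~\ref{thm:stokes-err-estm} with the convection estimates carried out in Theorem~\ref{thm:convection-error-estm}.
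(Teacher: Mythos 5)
Your proposal follows essentially the same route as the paper: the same interpolation/discrete error splitting, the same use of the projection properties to reduce the error equations to a right-hand side $J_1+J_2+J_3$, the same insertion of $P_0\betab$ and $\mathsf{P}_0(\betab\cdot\nb)$ to gain the factor $h\norm{\betab}_{W_h^{1,\infty}}$ against the inverse and trace inequalities, and the same application of the inf-sup condition of Theorem~\ref{thm:oseen-inf-sup-Bh} followed by the triangle inequality. The only detail worth adding is that recognizing the summed left-hand side as $\mc{B}_O$ evaluated at the discrete errors uses the skew-symmetrization identity \eqref{eq:I2-id}, but this is exactly how the paper proceeds as well.
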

\begin{proof}
The difference of \eqref{eq:oseen-eqs} and \eqref{eq:oseen-HDG-eqs} gives the error equations 
\algns{
\LRp{ \nu^{-1} e_{\sigmab}, \taub} - \LRp{ \grad e_{\ub},  \taub} + \LRa{ e_{\ub} - e_{\ubbar}, \taub \nb } &= 0, \\
 - \LRp{ e_{\sigmab} - e_{\ub} \otimes \betab, \grad \vb} + \LRp{ e_p, \div \vb}  \\
 + \LRa{ e_{\sigmab} \nb  - (\betab \cdot \nb) e_{\ub} - e_p \nb - \mc{S} (e_{\ub} - e_{\ubbar} ), \vb } &= 0, \\ 
 \LRp{ \div e_{\ub}, q} - \LRa{ e_{\ub} - e_{\ubbar} , q \nb} &= 0, \\
 \LRa{ e_{\sigmab} \nb - (\betab \cdot \nb) e_{\ub}  - e_p \nb - \mc{S} (e_{\ub} - e_{\ubbar} ), \vbbar } &= 0 .
}
%

%

Splitting the errors as
\algns{
e_{\sigmab} &= e_{\sigmab}^I + e_{\sigmab}^h := \LRp{ \sigmab - {\Pi}_{\bs{\Sigma}}  \sigmab } + \LRp{ {\Pi}_{\bs{\Sigma}} \sigmab - \sigmab_h }, \\
e_{\ub} &= e_{\ub}^I + e_{\ub}^h := \LRp{ {\ub} - {\Pi}_{\Vb}  \ub } + \LRp{ {\Pi}_{\Vb} \ub - \ub_h } , \\
e_{\ubbar} &= e_{\ubbar}^I + e_{\ubbar}^h := \LRp{ {\ub} - P_{\bs{M}}  \ubbar } + \LRp{ P_{\bs{M}} \ub - \ubbar_h } , \\
e_{p} &= e_{p}^I + e_{p}^h := \LRp{ {p} - {\Pi}_{Q}  p } + \LRp{ {\Pi}_{Q} p - p_h } ,
} 
%
we can reduce \eqref{eq:stokes-err} to 
\begin{subequations}
\label{eq:oseen-red-err}
\algn{
\label{eq:oseen-red-err-1} &\LRp{ \nu^{-1} e_{\sigmab}^h, \taub} - \LRp{ \grad e_{\ub}^h,  \taub} + \LRa{ e_{\ub}^h - e_{\ubbar}^h, \taub \nb }  = - \LRp{ \nu^{-1} e_{\sigmab}^I, \taub}, \\
\label{eq:oseen-red-err-2} & - \LRp{ e_{\sigmab}^h - e_{\ub}^h \otimes \betab, \grad \vb} + \LRp{ e_p^h, \div \vb}  \\
\notag  & + \LRa{ e_{\sigmab}^h \nb  - (\betab \cdot \nb) e_{\ubbar}^h - e_p^h \nb - \mc{S} (e_{\ub}^h - e_{\ubbar}^h ), \vb }  \\ 
\notag  &\quad  = - \LRp{ e_{\ub}^I \otimes \betab , \grad \vb} + \LRa{ (\betab \cdot \nb) e_{\ubbar}^I, \vb}  , \\
\label{eq:oseen-red-err-3} &\LRp{ \div e_{\ub}^h, q} - \LRa{ e_{\ub}^h - e_{\ubbar}^h , q \nb} = 0, \\
\label{eq:oseen-red-err-4} &- \LRa{ e_{\sigmab}^h \nb - (\betab \cdot \nb) e_{\ubbar}^h  - e_p^h \nb - \mc{S} (e_{\ub}^h - e_{\ubbar}^h ), \vbbar }  = - \LRa{ (\betab \cdot \nb) e_{\ubbar}^I, \vbbar} .
}
\end{subequations}
The sum of these equations with \eqref{eq:I2-id} gives
\algns{
&\mc{B}_O \LRp{ (e_{\sigmab}^h, e_{\ub}^h, e_{\ubbar}^h, e_p^h),(\taub, \vb, \vbbar, q)  } \\
&= - \LRp{ \nu^{-1} e_{\sigmab}^I, \taub} - \LRp{ e_{\ub}^I \otimes \betab , \grad \vb} +  \LRa{ (\betab \cdot \nb) e_{\ubbar}^I, \vb - \vbbar} \\
&=: J_1(\taub) + J_2(\vb) + J_3((\vb, \vbbar)) .
}
For any $0 < \e < \gamma_O$, there exists $(\taub, \vb, \vbbar, q) \in \tilde{\Yb}_h$
such that $\norm{(\taub, \vb, \vbbar, q)}_{\tilde{\Yb}_h} \le 1$ and 
\algn{ \label{eq:oseen-coercive}
\norm{(e_{\sigmab}^h, e_{\ub}^h, e_{\ubbar}^h, e_p^h)}_{\tilde{\Yb}_h} &\le (\gamma_O - \e)^{-1} \mc{B}_O \LRp{ (e_{\sigmab}^h, e_{\ub}^h, e_{\ubbar}^h, e_p^h),(\taub, \vb, \vbbar, q)  } \\
\notag &= (\gamma_O - \e)^{-1} \LRp{ J_1(\taub) + J_2(\vb)  + J_3((\vb, \vbbar)) } .
}
From the assumption $\norm{(\taub, \vb, \vbbar, q)}_{\tilde{\Yb}_h} \le 1$, the Cauchy--Schwarz inequality, H\"{o}lder inequality, and a trace inequality, we find that 
\algns{
| J_1(\taub) | &\le \nu^{-\frac 12} \norm{ e_{\sigmab}^I }_0 , \\
| J_2(\vb) | &= | \LRp{ e_{\ub}^I \otimes (\betab - P_0 \betab), \grad \vb } | \le C \norm{\betab}_{W_h^{1,\infty}} \norm{e_{\ub}^I} , \\
| J_3((\vb, \vbbar)) | &= \LRa{ (\betab \cdot \nb - \mathsf{P}_0 \betab \cdot \nb) e_{\ubbar}^I, \vb - \vbbar} \le C \tau_{\betab, \min}^{- \frac 12} \norm{\betab}_{W_h^{1,\infty}(\Omega)} h^{\frac 12} \norm{ e_{\ubbar}^I }_0 .
}
Since \eqref{eq:oseen-coercive} and these three estimates hold for any $\e>0$, combining these results give 
\algns{
\norm{(e_{\sigmab}^h, e_{\ub}^h, e_{\ubbar}^h, e_p^h)}_{\tilde{\Yb}_h} \le \gamma_O^{-1} \LRp{ \nu^{-\frac 12} \norm{e_{\sigmab}^I}_0 + C \norm{\betab}_{W_h^{1,\infty}} \LRp{ \norm{e_{\ub}^I} + \tau_{\betab, \min}^{- \frac 12} h^{\frac 12} \norm{ e_{\ubbar}^I }_0 } }. 
}
The conclusion follows by the triangle inequality.
\end{proof}

\section{Conclusions}
\label{sec:conclusion}
In this paper we discuss the stability and a priori error estimates of HDG methods without the elliptic regularity assumption. An analysis utilizing a stabilized saddle point structure is a key to obtain the results. Based on the idea we showed that optimal error estimates can be obtained for the Poisson, convection-diffusion-reaction, the Stokes, and the Oseen equations. Extension of this analysis to HDG methods for other PDE problems is a topic of future research.

\bibliographystyle{plain} 

\begin{thebibliography}{10}

\bibitem{Arnold:1985}
D.~N. Arnold and F.~Brezzi.
\newblock Mixed and nonconforming finite element methods: implementation,
  postprocessing and error estimates.
\newblock {\em RAIRO Mod\'el. Math. Anal. Num\'er.}, 19(1):7--32, 1985.

\bibitem{Cockburn-Cesmelioglu:2013}
Aycil Cesmelioglu, Bernardo Cockburn, Ngoc~Cuong Nguyen, and Jaume Peraire.
\newblock Analysis of {HDG} methods for {O}seen equations.
\newblock {\em J. Sci. Comput.}, 55(2):392--431, 2013.

\bibitem{Chen-Qiu:2017}
Huangxin Chen, Weifeng Qiu, Ke~Shi, and Manuel Solano.
\newblock A superconvergent {HDG} method for the {M}axwell equations.
\newblock {\em J. Sci. Comput.}, 70(3):1010--1029, 2017.

\bibitem{Chen-Cockburn:2012}
Yanlai Chen and Bernardo Cockburn.
\newblock Analysis of variable-degree {HDG} methods for convection-diffusion
  equations. {P}art {I}: general nonconforming meshes.
\newblock {\em IMA J. Numer. Anal.}, 32(4):1267--1293, 2012.

\bibitem{Cockburn-Dubois:2014}
B.~Cockburn, O.~Dubois, J.~Gopalakrishnan, and S.~Tan.
\newblock Multigrid for an {HDG} method.
\newblock {\em IMA J. Numer. Anal.}, 34(4):1386--1425, 2014.

\bibitem{Cockburn:2009}
B.~Cockburn, J.~Gopalakrishnan, and R.~Lazarov.
\newblock Unified hybridization of discontinuous {G}alerkin, mixed, and
  continuous {G}alerkin methods for second order elliptic problems.
\newblock {\em SIAM J. Numer. Anal.}, 47(2):1319--1365, 2009.

\bibitem{Cockburn:2010}
B.~Cockburn, J.~Gopalakrishnan, and F.~Sayas.
\newblock A projection--based error analysis of {HDG} methods.
\newblock {\em Math. Comp.}, 79:1351--1367, 2010.

\bibitem{Cockburn-Cui:2012}
Bernardo Cockburn and Jintao Cui.
\newblock An analysis of {HDG} methods for the vorticity-velocity-pressure
  formulation of the {S}tokes problem in three dimensions.
\newblock {\em Math. Comp.}, 81(279):1355--1368, 2012.

\bibitem{Cockburn:2008}
Bernardo Cockburn, Bo~Dong, and Johnny Guzm\'{a}n.
\newblock A superconvergent {LDG}-hybridizable {G}alerkin method for
  second-order elliptic problems.
\newblock {\em Math. Comp.}, 77(264):1887--1916, 2008.

\bibitem{Cockburn-Gopalakrishnan:2011}
Bernardo Cockburn, Jayadeep Gopalakrishnan, Ngoc~Cuong Nguyen, Jaume Peraire,
  and Francisco-Javier Sayas.
\newblock Analysis of {HDG} methods for {S}tokes flow.
\newblock {\em Math. Comp.}, 80(274):723--760, 2011.

\bibitem{cockburn-stokes}
Bernardo Cockburn, Jayadeep Gopalakrishnan, Ngoc~Cuong Nguyen, Jaume Peraire,
  and Francisco-Javier Sayas.
\newblock Analysis of {HDG} methods for {S}tokes flow.
\newblock {\em Math. Comp.}, 80(274):723--760, 2011.

\bibitem{cockburn}
Bernardo Cockburn, Jayadeep Gopalakrishnan, and Francisco-Javier Sayas.
\newblock A projection-based error analysis of {HDG} methods.
\newblock {\em Math. Comp.}, 79(271):1351--1367, 2010.

\bibitem{Cockburn-Sayas:2014}
Bernardo Cockburn and Francisco-Javier Sayas.
\newblock Divergence-conforming {HDG} methods for {S}tokes flows.
\newblock {\em Math. Comp.}, 83(288):1571--1598, 2014.

\bibitem{Cockburn-Shi:2013}
Bernardo Cockburn and Ke~Shi.
\newblock Conditions for superconvergence of {HDG} methods for {S}tokes flow.
\newblock {\em Math. Comp.}, 82(282):651--671, 2013.

\bibitem{Cui:2013}
J.~Cui and W.~Zhang.
\newblock {An analysis of HDG methods for the Helmholtz equation}.
\newblock {\em IMA Journal of Numerical Analysis}, 34(1):279--295, 2013.

\bibitem{veubeke}
B.~M. {Fraeijs de Veubeke}.
\newblock Stress function approach.
\newblock In {\em World Congress on the Finite Element Method in Structural
  Mechanics}. Bournemouth, 1975.

\bibitem{Fu-Qiu-Zhang:2015}
Guosheng Fu, Weifeng Qiu, and Wujun Zhang.
\newblock An analysis of {HDG} methods for convection-dominated diffusion
  problems.
\newblock {\em ESAIM Math. Model. Numer. Anal.}, 49(1):225--256, 2015.

\bibitem{Galdi:2011}
G.~P. Galdi.
\newblock {\em An introduction to the mathematical theory of the
  {N}avier-{S}tokes equations. {S}teady-state problems}.
\newblock Springer Monographs in Mathematics. Springer, New York, {S}econd
  edition, 2011.

\bibitem{Griesmaier:2011}
R.~Griesmaier, , and P.~Monk.
\newblock Error analysis for a hybridizable discontinuous {G}alerkin method for
  the {H}elmholtz equation.
\newblock {\em J. Sci. Comput.}, 49(3):291--310, 2011.

\bibitem{Lee-Rhebergen}
Jeonghun~J. Lee and Sander Rhebergen.
\newblock A hybridized discontinuous {G}alerkin method for {P}oisson-type
  problems with sign-changing coefficients.
\newblock {\em submitted}, 2019.

\bibitem{Lu-Chen:2017}
Peipei Lu, Huangxin Chen, and Weifeng Qiu.
\newblock An absolutely stable {$hp$}-{HDG} method for the time-harmonic
  {M}axwell equations with high wave number.
\newblock {\em Math. Comp.}, 86(306):1553--1577, 2017.

\bibitem{Nguyen-Cockburn:2010}
N.~C. Nguyen, J.~Peraire, and B.~Cockburn.
\newblock A hybridizable discontinuous {G}alerkin method for {S}tokes flow.
\newblock {\em Comput. Methods Appl. Mech. Engrg.}, 199(9-12):582--597, 2010.

\end{thebibliography}

\end{document}